\documentclass[12pt,reqno]{amsart}

\usepackage[T1]{fontenc}
\usepackage{lmodern}
\usepackage{microtype}
\usepackage{mathtools}
\usepackage{amssymb}
\usepackage{mathrsfs}
\usepackage{enumitem}
\usepackage{booktabs}
\usepackage{array}
\usepackage{xcolor}
\usepackage[colorlinks=true,linkcolor=blue!55!black,citecolor=green!45!black,
  urlcolor=blue!65!black]{hyperref}
\usepackage[nameinlink,capitalise,noabbrev]{cleveref}

\hypersetup{
  pdftitle={Simple Cartan-Free Modules of Finite Rank over the Super-Virasoro Algebras},
  pdfauthor={Yang Chen, Yongqi Luo, and Yueqiang Zhao},
  pdfsubject={A uniform arbitrary-rank classification in the Ramond and Neveu--Schwarz sectors}
}

\setlist[enumerate]{label=\textup{(\roman*)},leftmargin=2.2em}
\numberwithin{equation}{section}

\newtheorem{theorem}{Theorem}[section]
\newtheorem{proposition}[theorem]{Proposition}
\newtheorem{lemma}[theorem]{Lemma}
\newtheorem{corollary}[theorem]{Corollary}
\theoremstyle{definition}
\newtheorem{definition}[theorem]{Definition}

\theoremstyle{remark}
\newtheorem{remark}[theorem]{Remark}

\newcommand{\C}{\mathbb C}
\newcommand{\Z}{\mathbb Z}
\newcommand{\Ztwo}{\mathbb Z_2}
\newcommand{\D}{\mathcal D}
\newcommand{\A}{\mathcal A}
\newcommand{\K}{\mathcal K}

\title[Simple finite-rank Cartan-free modules]
{Simple Cartan-Free Modules of Finite Rank over the
Super-Virasoro Algebras}

\author{Yang Chen}
\address{Y. Chen: School of Science, Jimei University, Xiamen, Fujian, 361021, P. R. China}
\email{chenyang1729@hotmail.com}

\author{Yongqi Luo}
\address{Y. Luo: School of Science, Jimei University, Xiamen, Fujian, 361021, P. R. China}
\email{luoyongqi777@outlook.com}

\author{Yueqiang Zhao}
\address{Y. Zhao: School of Mathematics and Statistics, Xinyang Normal University, Xinyang, 464000, P. R. China}
\email{yueqiangzhao@163.com}
\thanks{This work was supported by the National Natural Science Foundation
of China (Grant No.~12301040) and the Nanhu Scholars Program for Young
Scholars of Xinyang Normal University (Grant No.~2021030).}
\date{}

\subjclass[2020]{17B10, 17B65, 17B68}
\keywords{super-Virasoro algebra, Cartan-free module, non-weight module,
Weyl algebra}

\begin{document}

\begin{abstract}
We classify nontrivial simple super-Virasoro modules finitely
generated over \(\C[L_0]\) using simple Laurent Weyl algebra
modules finite free over \(\C[D]\).  The classified modules are
finite Cartan-free with equal parity ranks.  We also prove
finite length without simplicity and construct explicit
nonsplit self-extensions.
\end{abstract}

\maketitle

\section{Introduction}
\label{sec:unified-introduction}

The super-Virasoro algebras, also called the \(N=1\) superconformal
algebras, are infinite-dimensional Lie superalgebras extending the
Virasoro algebra by odd generators.  They occur as symmetry algebras
in superconformal field theory and superstring theory, and their
representations connect these subjects with the structure theory of
infinite-dimensional Lie superalgebras.  There are two sectors, the
Ramond and Neveu--Schwarz algebras, distinguished by the integer and
half-integer indices of their odd generators.

Throughout, all vector spaces and algebras are over \(\C\).
All modules over Lie superalgebras are \(\Ztwo\)-graded, and all
homomorphisms are even.  For \(\epsilon\in\{0,1/2\}\), let
\(\widehat{\mathfrak g}^{\,\epsilon}\) be the Lie superalgebra with
even basis \(\{L_m,C:m\in\Z\}\), odd basis
\(\{G_p:p\in\Z+\epsilon\}\), and relations
\begin{align*}
 [L_m,L_n]&=(n-m)L_{m+n}
  +\delta_{m+n,0}\frac{m^3-m}{12}C,
\\
 [L_m,G_p]&=\left(p-\frac m2\right)G_{m+p},
\\
 [G_p,G_q]&=-2L_{p+q}
  +\delta_{p+q,0}\frac{4p^2-1}{12}C,
\\
 [C,\widehat{\mathfrak g}^{\,\epsilon}]&=0.
\end{align*}
The cases \(\epsilon=0\) and \(\epsilon=1/2\) are the Ramond
and Neveu--Schwarz algebras, respectively.  Write
\(\mathfrak g^\epsilon=\widehat{\mathfrak g}^{\,\epsilon}/\C C\).

Let \(\mathfrak h_{\bar0}=\C L_0\) denote the Cartan subalgebra
of the even part modulo the center, so
\(U(\mathfrak h_{\bar0})=\C[L_0]\).
A \(\mathfrak g^\epsilon\)-module or \(\widehat{\mathfrak g}^{\,\epsilon}\)-module
\(M=M_{\bar0}\oplus M_{\bar1}\) is called
\emph{finite Cartan-free} if it is free of finite rank over
\(\C[L_0]\).  This is equivalent to both parity components being
finite free.
At a fixed central character \(C=c\), this polynomial algebra is
identified with \(U(\C L_0\oplus\C C)/(C-c)\).
The parity ranks are equal for every finite Cartan-free module
at a fixed central character by \cref{lem:automatic-balance}.
We use the parity change
\(\Pi\) with \(x\cdot\Pi v=(-1)^{|x|}\Pi(xv)\) for homogeneous \(x\).

The simple Harish--Chandra modules over the super-Virasoro algebras
were classified by Su \cite{Su1995}.  New proofs using \(A\)-covers
and the theory of jet modules \cite{Billig2007} were given by
Cai--Liu--L\"u \cite{CaiLiuLu2021}
and Cai--L\"u \cite{CaiLu2022}.  For Witt superalgebras, related
methods were used independently by Xue--L\"u \cite{XueLu2021} and
Billig--Futorny--Iohara--Kashuba \cite{BilligFutornyIoharaKashuba2020}
to classify simple weight modules with finite-dimensional weight spaces
relative to the even Euler operators.
Beyond the weight setting, a natural
class consists of modules that are free over the enveloping algebra
of a Cartan subalgebra.  Yang--Yao--Xia \cite{YangYaoXia2020}
classified the Ramond modules free of rank one over
\(U(\C L_0\oplus\C G_0)\)
and the Neveu--Schwarz modules free of rank two over
\(\C[L_0]\).  The corresponding problem in arbitrary
finite rank was solved for the Witt algebra by Guo--Liu--L\"u--Zhao
\cite{GuoLiuLuZhao2020}, who classified simple modules finitely
generated over the enveloping algebra of its Cartan subalgebra.
L\"u--Zhao \cite{LuZhao2014} constructed Virasoro modules from
simple Weyl algebra modules.
Chen--Dai--Liu--Pei \cite{ChenDaiLiuPei2024}
extended this construction to the super-Virasoro algebras and
established simplicity and isomorphism criteria.
Guo--Li--Liu \cite{GuoLiLiu2026} constructed Neveu--Schwarz modules
from Weyl superalgebra modules and restricted modules over the
positive-part subalgebra of the Neveu--Schwarz algebra.

Let \(\D\) be the Weyl algebra generated by \(t^{\pm1}\) and \(D\)
subject to the relation \(Dt-tD=t\).  For a \(\D\)-module \(P\) and \(b\in\C\),
let \(H_{\epsilon,b}(P)=P\oplus\overline P\) be the super-Virasoro
module constructed in \cite{ChenDaiLiuPei2024}, where
\(\overline P\) is an odd copy of \(P\).
In this paper, we prove the following classification theorem.

\begin{theorem}
\label{thm:unified-main}
For \(\epsilon\in\{0,1/2\}\), the nontrivial simple finite
Cartan-free modules over \(\mathfrak g^\epsilon\) or
\(\widehat{\mathfrak g}^{\,\epsilon}\) are exactly
\[
 \Pi^\delta H_{\epsilon,b}(P),\qquad
 \delta\in\Ztwo,\quad b\in\C\setminus\{\tfrac12\},
\]
where \(P\) is a simple \(\D\)-module finite free over \(\C[D]\)
and \(C\) acts trivially in the centrally extended case.
The parameters \(b,\delta\) and the isomorphism class of \(P\)
are uniquely determined.
\end{theorem}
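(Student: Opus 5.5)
The plan is to restrict a simple finite Cartan-free module \(M\) to the even part, use the Witt/Virasoro classification there, and then reconstruct the odd action through the Weyl-algebra module \(P\).

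\emph{Step 1: central charge.} By \cref{lem:automatic-balance}, \(M=M_{\bar0}\oplus M_{\bar1}\) has equal parity ranks, say \(n\). The central element \(C\) acts on the simple module \(M\) by a scalar \(c\). I would first show \(c=0\) from the finiteness over \(\C[L_0]\). Each component \(M_{\bar i}\) is a Virasoro module of finite rank over \(\C[L_0]\). The standard argument of Guo--Liu--L\"u--Zhao applies: for \(v\in M_{\bar i}\), the elements \(L_m v\) span a finitely generated \(\C[L_0]\)-module. This forces the operators \(L_m\), written in a \(\C[L_0]\)-basis, to have the polynomial form
\[
 L_m = \sigma^m\circ(\text{polynomial in } m \text{ with coefficients in } \mathrm{End}_{\C[L_0]}),
\]
where \(\sigma\) is the shift \(L_0\mapsto L_0-m\). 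Comparing \([L_m,L_{-m}]\) then kills the cubic term, so \(c=0\). This reduces the statement to \(\mathfrak g^\epsilon\).

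\emph{Step 2: the even part.} Next I would analyse \(M_{\bar0}\) as a Witt-module finite over \(\C[L_0]\), but without assuming simplicity. The relevant structure theory is the finite-length result stated later in the paper. The goal is to produce, on each parity component, a structure of \(\D\)-module, with \(t\) acting as a \(\C[D]\)-semilinear shift and \(D\) corresponding to \(L_0\) up to a constant. In this structure \(L_m\) acts as \(t^m(D+mb)\) modulo lower-order polynomial corrections in \(m\).

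\emph{Step 3: the odd operators.} Define \(T_p:=G_p\) restricted to \(M_{\bar0}\to M_{\bar1}\) and \(M_{\bar1}\to M_{\bar0}\). The relation \([L_0,G_p]=pG_p\) says that \(G_p\) acts as \(t^p\) times a \(\C[D]\)-linear map, hence as a polynomial in \(p\) with matrix coefficients. Imposing the identity \([G_p,G_q]=-2L_{p+q}\) and then \([L_m,G_p]\) pins down these coefficients. The conclusion I aim for is: after identifying \(M_{\bar1}\) with \(\overline P\) via \(G_\epsilon\) (or \(G_0\), suitably normalized), the whole action coincides with the formulas defining \(H_{\epsilon,b}(P)\) from \cite{ChenDaiLiuPei2024}, or with its parity shift \(\Pi H_{\epsilon,b}(P)\). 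Simplicity of \(M\) then forces \(P\) to be simple: any \(\D\)-submodule \(Q\subset P\) gives the submodule \(H_{\epsilon,b}(Q)\) of \(M\). Freeness of \(M\) over \(\C[L_0]\) gives freeness of \(P\) over \(\C[D]\).

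\emph{Step 4: converse and uniqueness.} The simplicity criterion of \cite{ChenDaiLiuPei2024} shows that \(H_{\epsilon,b}(P)\) is simple for \(b\ne\tfrac12\) and \(P\) simple; the value \(b=\tfrac12\) must be excluded because a proper submodule then appears. The isomorphism criterion of the same paper recovers \(b\), \(\delta\) and the class of \(P\). Concretely, \(\delta\) is determined by which parity carries \(P\), \(b\) by the leading coefficient in \(m\) of \(L_m\), and \(P\) as the \(\D\)-module on the even part.

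\emph{Main obstacle.} The hard step is Step 3: showing that the matrix-valued polynomials describing \(G_p\) have degree at most one and are compatible in exactly the rigid way needed. In arbitrary rank the coefficients need not commute, so the rank-one argument of Yang--Yao--Xia does not transfer. I would first expand \([G_p,G_q]+2L_{p+q}=0\) as a polynomial identity in \((p,q)\). Comparing top-degree terms should show that the leading coefficient is nilpotent, and nilpotence combined with simplicity should force it to vanish.
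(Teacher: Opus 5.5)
\textbf{There is a genuine gap.} The step that carries the whole classification is asserted rather than proved: the existence of a shift operator $t$ on $M$, and the polynomial dependence of $L_m$ and $G_p$ on the index. The reasons you give do not work.

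\emph{Polynomiality of $L_m$.} Finite generation of $\sum_m\C[L_0]L_mv$ is automatic by Noetherianity and says nothing about how $L_m$ depends on $m$. With the basis shift $L_0\mapsto L_0-m$, the form you claim is actually false. In $H_{\epsilon,b}(P_\lambda)$ the even component has $\C[L_0]$-basis $\{1\}$, unique up to scalars, and $L_m\cdot1=\lambda^m(L_0-m+mb)$. This is not polynomial in $m$ when $\lambda\ne1$. Polynomiality appears only after twisting by the genuine operator $t$, which is not part of the $\mathfrak g^\epsilon$-structure of $M$ and which you never construct.

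\emph{Polynomiality of $G_p$.} The relation $[L_0,G_p]=pG_p$ fixes only the degree of $G_p$, so ``hence polynomial in $p$'' is a non sequitur. The degree-one and nilpotency argument in your ``main obstacle'' is left as a hope, yet it is exactly what has to be shown in arbitrary rank.

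\emph{How the paper avoids this.} The paper never builds $t$ on $M$. The weighting functor, together with the Harish-Chandra input (Mathieu, Cai--Liu--L\"u, Cai--L\"u), shows that $M$ is annihilated by the differentiators $\Omega^{(N)}_{X,Y}(k,p)$ (\Cref{lem:uniform-four-differentiators-weight,lem:uniform-differentiator-lift}). This makes the $A$-cover finitely generated over $\C[L_0]$ (\Cref{thm:uniform-cover-finite}), and the $\mathcal A$-action on the cover supplies $t$. A simple admissible subquotient $E$ then surjects onto $M$. The identification $E\simeq\Pi^\delta H_{\epsilon,b}(P)$ comes from the factorizations $\K\otimes U(\cdot)$ and the tensor-product lemma, with no matrix-polynomial computation at all. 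Since only a surjection is obtained, $b=1/2$ is excluded by \Cref{prop:unified-H-simple}\textup{(ii)}.

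\textbf{Two further problems.}
\begin{enumerate}
\item Your derivation of $c=0$ from $[L_m,L_{-m}]$ is not justified. Conjugation by $t^{\pm m}$ turns a polynomial in $L_0$ into a polynomial in $m$ of the same degree, so the cubic term is not isolated by degree. Moreover, the Guo--Liu--L\"u--Zhao theory concerns simple modules over the centerless Witt algebra, whereas $M_{\bar i}$ is neither simple nor a priori centerless. The paper instead uses Dixmier's lemma, the weighting functor and Su's classification (\Cref{prop:uniform-central-charge}).
\item Step~2 invokes \Cref{prop:finite-generated-finite-length}. Its proof rests on \Cref{cor:finite-generated-simple}, and hence on the theorem you are proving, so the argument is circular. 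Even granting finite length of $M_{\bar0}$ as a Witt module, that alone would not give $M_{\bar0}$ a $\D$-module structure.
\end{enumerate}

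Your Step~4 (converse and uniqueness via the Chen--Dai--Liu--Pei criteria) agrees with the paper and is fine.
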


\begin{corollary}
\label{cor:finite-generated-simple}
Every simple \(\mathfrak g^\epsilon\)- or
\(\widehat{\mathfrak g}^{\,\epsilon}\)-module finitely generated
over \(\C[L_0]\) is either a one-dimensional trivial module,
concentrated in even or odd degree, or one of the modules in
\cref{thm:unified-main}.
\end{corollary}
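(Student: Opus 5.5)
The corollary follows from \cref{thm:unified-main} once we know that a simple module $M$ finitely generated over $\C[L_0]$ is either one-dimensional trivial or free of finite rank over $\C[L_0]$. The first step is the structure of $M$ as a finitely generated $\C[L_0]$-module: it is $F\oplus T$ with $F$ free of finite rank and $T$ the torsion submodule. The torsion $T$ is finite-dimensional and is the sum of generalized eigenspaces $M^{(\lambda)}$ of $L_0$. Since $C$ is central, Schur's lemma for a simple module of countable dimension gives that $C$ acts by a scalar $c$. Hence the torsion part is well defined whether we work over $\mathfrak g^\epsilon$ or $\widehat{\mathfrak g}^{\,\epsilon}$.

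The second step is to show that if $T\neq0$ then $M$ is trivial. Because $[L_0,L_m]=mL_m$ and $[L_0,G_p]=pG_p$, we have
\[
L_mM^{(\lambda)}\subseteq M^{(\lambda+m)},\qquad G_pM^{(\lambda)}\subseteq M^{(\lambda+p)}.
\]
So the sum $W=\bigoplus_\lambda M^{(\lambda)}$ of all generalized weight spaces is a submodule. It is also $\Ztwo$-graded, since $L_0$ is even. By simplicity, $W=M$, so $M=T$ is finite-dimensional. A finite-dimensional module has only finitely many nonzero generalized weight spaces, while $L_m$ shifts weights by $m$. Therefore $L_m$ acts nilpotently for $m\neq0$, and so does $G_p$ for $p\neq0$. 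I would then show that $L_m$ acts as zero for $m\ne 0$. The action factors through a finite-dimensional quotient of $\mathfrak g^\epsilon$ (or of $\widehat{\mathfrak g}^{\,\epsilon}$), and the even part of that quotient is a finite-dimensional quotient of the Virasoro or Witt algebra. That Lie algebra is simple modulo the center, so the kernel of the action contains every $L_m$, and all of $\mathrm{Witt}$ acts through the center. Then $L_0=\tfrac12[L_{-1},L_1]$ acts as zero. Also $C$ acts by $[L_2,L_{-2}]$ up to scalars and $L_0$, hence as zero. Finally $[G_p,G_{-p}]=-2L_0+\cdots$ shows $G_p^2=-L_{2p}$ acts as $0$. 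Using $[L_m,G_p]=(p-\tfrac m2)G_{m+p}$, each $G_q$ is expressible as $[L_m,G_{q-m}]$ with a nonzero coefficient for a suitable $m$, so $G_q$ acts as zero. Thus $M$ is a trivial module. Being simple, it is one-dimensional and concentrated in one parity.

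The third step handles $T=0$. Then $M$ is free of finite rank over $\C[L_0]$, that is, finite Cartan-free. It is nontrivial, since a trivial module is torsion over $\C[L_0]$. By \cref{thm:unified-main}, $M\cong\Pi^\delta H_{\epsilon,b}(P)$ as claimed.

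The main obstacle is the second step: showing that a finite-dimensional module is trivial. I would argue it via the simplicity of the Witt algebra, which has no nonzero finite-dimensional representations. The odd generators are then handled by the bracket relations.
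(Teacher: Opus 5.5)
Your proposal is correct and has the same structure as the paper's proof. The $\C[L_0]$-torsion, which you describe as the sum of generalized $L_0$-eigenspaces, is a graded submodule. So $M$ is either torsion-free, hence finite free and covered by \cref{thm:unified-main}, or finite dimensional.

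The difference is in how you show that a finite-dimensional module is trivial. You use the simplicity of the Witt algebra: a finite-codimensional ideal of the Virasoro algebra, together with $\C C$, is the whole algebra. Hence each $L_m$ acts through $C$, and then $L_0=\tfrac12[L_{-1},L_1]$ and $C=2[L_2,L_{-2}]+8L_0$ act by zero.

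The paper stays inside the weight grading. Your nilpotency observation is in fact vanishing: $L_{\pm m}$ acts as zero once $m$ exceeds the diameter of the finite generalized spectrum. Then $0=[L_m,L_{-m}]=-2mL_0+\frac{m^3-m}{12}C$ for two distinct such $m$ gives $L_0=C=0$. From this, $X_s=s^{-1}[L_0,X_s]$ acts by zero for $s\ne0$.

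The paper's route is elementary and self-contained, while yours is more conceptual but imports the simplicity of the Witt algebra. Both handle $G_0$ with $[L_1,G_{-1}]=-\tfrac32G_0$, which is a special case of your $[L_m,G_{q-m}]$ step.

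Two minor points:
\begin{itemize}
\item $G_p^2=-L_{2p}$ comes from $[G_p,G_p]$, not from $[G_p,G_{-p}]$. That line is unnecessary anyway, since your bracket argument already kills every $G_q$.
\item In the centrally extended case, ``the kernel contains every $L_m$'' holds only after $C$ is shown to act by zero. Your subsequent computation does establish this.
\end{itemize}
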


The proof combines the weighting functor
\cite{Nilsson2016} with the \(A\)-cover method of Billig--Futorny
\cite{BilligFutorny2016} to obtain \(A\)-covers finitely generated
over \(\C[L_0]\).  The algebra factorizations in
\cite{CaiLiuLu2021,CaiLu2022} and the tensor-product lemma in
\cite{XueLu2021} reduce the problem to simple Laurent Weyl
superalgebra modules and one-dimensional modules for the corresponding Lie
superalgebras.  The simplicity and isomorphism criteria of
\cite{ChenDaiLiuPei2024} complete the
classification.  Finally, we show that every module finitely generated over
\(\C[L_0]\) has finite length.  For the functors
\(H_{\epsilon,b}\) of \cite{ChenDaiLiuPei2024}, we determine
the even homomorphisms between their values and construct
explicit nonsplit self-extensions of the classified simple
modules.

\section{Modules from the Weyl algebra}
\label{sec:unified-Weyl-H}

Let
\begin{equation*}
 \D=\C\langle t^{\pm1},D\rangle/(Dt-tD-t)
\end{equation*}
be the algebra of differential operators on \(\C[t^{\pm1}]\).
Setting \(\partial_t=t^{-1}D\) gives
\(\partial_t t-t\partial_t=1\), so \(\D\) is the localization of
the first Weyl algebra at \(t\).  Our generator \(D\) corresponds
to \(\partial=t\,d/dt\) in
\cite{LuZhao2014}.  Thus
\begin{equation*}
 Dt^m=t^m(D+m),\qquad t^mf(D)=f(D-m)t^m
\end{equation*}
for \(m\in\Z\) and \(f\in\C[D]\).

\begin{lemma}
\label{lem:weyl-finite-length}
Let \(E\) be a \(\D\)-module finitely generated over \(\C[D]\).
Then \(E\) is free of finite rank over \(\C[D]\), and its
composition length as a \(\D\)-module is at most
\(\operatorname{rank}_{\C[D]}E\).
\end{lemma}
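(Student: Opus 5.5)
Since \(\C[D]\) is a principal ideal domain, a finitely generated \(\C[D]\)-module \(E\) decomposes as a free part plus a torsion part \(T\). The plan is to show \(T=0\) and then bound the length by the rank.

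First I show that \(T\) is a \(\D\)-submodule. If \(f(D)v=0\) with \(f\neq0\), then \(f(D-m)t^mv=t^mf(D)v=0\), so \(t^mv\) is again torsion. Also \(D\) commutes with \(\C[D]\), so \(T\) is stable under \(D\) as well.

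Next I show \(T=0\). Since \(T\) is finitely generated torsion, it is finite-dimensional and annihilated by some nonzero \(g(D)\). Let \(S\subset\C\) be the finite set of eigenvalues of \(D\) on \(T\). If \(v\ne0\) is a generalized eigenvector with eigenvalue \(\lambda\), then \(t\) maps it to a generalized eigenvector with eigenvalue \(\lambda+1\), because \(Dt=t(D+1)\). The vector \(tv\) is nonzero, since \(t\) is invertible in \(\D\). Iterating, \(\lambda+m\in S\) for all \(m\ge0\), which contradicts finiteness. Hence \(T=0\) and \(E\) is free of finite rank \(r\).

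For the length bound I take a chain of \(\D\)-submodules \(0=E_0\subsetneq E_1\subsetneq\cdots\subsetneq E_k=E\) and aim to show \(k\le r\). Each quotient \(E/E_i\) is a \(\D\)-module finitely generated over \(\C[D]\). By the first part applied to it, \(E/E_i\) is free, and \(E_i\) is a \(\C[D]\)-submodule of a free module over a PID, so it is free too. The sequence \(0\to E_{i}/E_{i-1}\to E/E_{i-1}\to E/E_i\to0\) shows that \(E_i/E_{i-1}\) is a nonzero submodule of the free module \(E/E_{i-1}\). It is therefore torsion-free of positive rank. Ranks are additive, so \(\operatorname{rank}E_i\) strictly increases along the chain, giving \(k\le r\). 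Consequently every chain has length at most \(r\), and the composition length is at most \(r\).

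The only real point is the torsion-vanishing step, which relies on \(t\) being invertible. Over the non-localized Weyl algebra this step would fail, for example for the module \(\C[t]/(t)\).
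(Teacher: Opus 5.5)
Your proof is correct. For the length bound it is the paper's argument: every nonzero \(\D\)-subquotient is finitely generated over \(\C[D]\), hence free of positive rank, and rank is additive.

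The difference is in the freeness assertion. The paper does not prove it; it quotes it as the one-variable case of the Laurent Weyl result of Guo--Liu--L\"u--Zhao. You instead give a self-contained argument. The \(\C[D]\)-torsion \(T\) is a finite-dimensional \(\D\)-submodule, and the injective operator \(t\) shifts generalized \(D\)-eigenvalues on \(T\) by \(1\). This is impossible for a finite nonempty spectrum. Equivalently, \(tDt^{-1}=D-1\) on \(T\) gives \(\dim T=\operatorname{tr}(D|_T)-\operatorname{tr}((D-1)|_T)=0\).

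Your route makes the lemma independent of the literature and isolates exactly what is used, namely that \(t\) acts injectively on \(T\). The citation buys only brevity and a statement valid in several variables, which the paper never needs.

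A minor point about your closing aside. \(\C[t]/(t)\) is a module over the non-localized skew ring \(\C[D][t;\sigma]\), with \(t\) and \(D\) acting by zero. It is not a module over the first Weyl algebra \(\C\langle t,\partial_t\rangle\). Over that algebra the torsion vanishes anyway: it is stable under \(t\) and \(\partial_t\), and the relation \(\partial_t t-t\partial_t=1\) rules out nonzero finite-dimensional modules.
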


\begin{proof}
The freeness assertion is the one-variable case of the result for
Laurent Weyl modules in \cite{GuoLiuLuZhao2020}.
Every nonzero \(\D\)-subquotient of \(E\) is finitely generated
over \(\C[D]\), and hence free of positive rank.
The length bound follows by additivity of rank.
\end{proof}

We consider simple \(\D\)-modules
that are finite free over \(\C[D]\) (see \cite{Block1981} for
background on simple Weyl modules).

\begin{definition}[{\cite[Proposition~3.4 and Theorem~3.6]{ChenDaiLiuPei2024}}]
\label{def:unified-H}
For \(\epsilon\in\{0,1/2\}\), \(b\in\C\), and a
\(\D\)-module \(P\), put \(H_{\epsilon,b}(P)=P\oplus\overline P\) with
\(P\) even and \(\overline P\) odd.  The action is given by
\begin{align}
 L_m v&=t^m\bigl(D+(m-2\epsilon)b\bigr)v,\notag\\
 L_m\overline v
   &=\overline{t^m\bigl(D+(m-2\epsilon)b+\tfrac{m+2\epsilon}{2}\bigr)v},
 \label{eq:H-even-components}\\
 G_pv&=\overline{t^{p-\epsilon}(D+2(p-\epsilon)b)v},\notag\\
 G_p\overline v&=-t^{p+\epsilon}v,
 \label{eq:H-odd-components}
\end{align}
where \(m\in\Z\), \(p\in\Z+\epsilon\), and \(v\in P\).
The central element acts by zero.
\end{definition}

\begin{lemma}
\label{lem:automatic-balance}
Let \(M\) be a finite Cartan-free module with central charge
\(c\) in the centrally extended case.  Its two parity components
have equal rank over \(\C[L_0]\).
\end{lemma}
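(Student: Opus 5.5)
The plan is to compare the two parity ranks through odd operators, after checking that these operators are injective. Write \(M=M_{\bar0}\oplus M_{\bar1}\), \(r_i=\operatorname{rank}_{\C[L_0]}M_{\bar i}\), and let \(\K=\C(L_0)\) be the fraction field. Since \([L_0,G_p]=pG_p\), we have \(G_pf(L_0)=f(L_0-p)G_p\). So \(G_p\) is semilinear with respect to the automorphism \(\sigma_p\colon L_0\mapsto L_0-p\) of \(\C[L_0]\), and it maps \(M_{\bar0}\) into \(M_{\bar1}\) and \(M_{\bar1}\) into \(M_{\bar0}\).

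A \(\sigma_p\)-semilinear injective map between torsion-free modules extends to an injective semilinear map of \(\K\)-vector spaces. Semilinear maps preserve \(\K\)-linear independence in both directions, so \(\K\)-dimension cannot go up. Hence if one fixed \(G_p\) is injective on both \(M_{\bar0}\) and \(M_{\bar1}\), then \(r_0\le r_1\le r_0\).

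To obtain injectivity we use \(G_p^2=\tfrac12[G_p,G_p]=-L_{2p}+\tfrac{4p^2-1}{24}\,c\,\delta_{p,0}\).

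\textbf{Ramond case.} Take \(p=0\). Then \(G_0^2=-(L_0+c/24)\). Multiplication by a nonzero polynomial in \(L_0\) is injective on the torsion-free module \(M\), so \(G_0\) is injective, and \(G_0\) is even \(\C[L_0]\)-linear. This settles \(\epsilon=0\) immediately.

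\textbf{Uniform argument, including Neveu--Schwarz.} Fix \(p\neq0\) (for instance \(p=\tfrac12\) when \(\epsilon=\tfrac12\)), so that \(G_p^2=-L_{2p}\). It suffices to show that \(L_{2p}\) is injective on each \(M_{\bar i}\). Each \(M_{\bar i}\) is a module over the even part, which is the Virasoro algebra, and it is finite free over \(\C[L_0]\). The kernel \(N=\ker L_{2p}\cap M_{\bar i}\) is stable under \(L_0\), because \(L_0L_{2p}=L_{2p}(L_0+2p)\). Hence \(N\) is a \(\C[L_0]\)-submodule of a torsion-free module, and \(N=0\) exactly when \(\operatorname{rank}N=0\). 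So it is enough to prove generic injectivity, that is, injectivity of \(L_{2p}\) on \(M_{\bar i}\otimes\K\).

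For this we use the result of Guo--Liu--L\"u--Zhao \cite{GuoLiuLuZhao2020}. A Virasoro module finitely generated over \(\C[L_0]\) has finite length, and its simple subquotients are either one-dimensional trivial modules or modules built from simple \(\D\)-modules finite free over \(\C[D]\). On the latter, \(L_m\) acts as \(t^m(D+mb)\), which is injective for \(m\neq0\) because \(t\) is invertible and \(D+mb\) is injective on a free \(\C[D]\)-module. (We would take a composition series, which exists by that finite-length result.) After tensoring with \(\K\), trivial factors have rank \(0\) and disappear. On each remaining factor \(L_{2p}\) induces an injective semilinear map of finite-dimensional \(\K\)-spaces, which is therefore bijective. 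By the five lemma along the filtration, \(L_{2p}\) is bijective on \(M_{\bar i}\otimes\K\), hence injective on \(M_{\bar i}\). It follows that \(G_p\) is injective on both parity components, and the rank comparison above gives \(r_0=r_1\).

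\textbf{Main obstacle.} The delicate step is this generic injectivity of \(L_{2p}\). It relies on the structure of finite-rank Virasoro modules from \cite{GuoLiuLuZhao2020}. The one point to verify carefully is that the trivial subquotients, which may genuinely occur as torsion quotients, contribute nothing after tensoring with \(\K\). If invoking the full classification is undesirable, I would first try to prove directly that a nonzero \(L_0\)-stable kernel of \(L_{2p}\) generates a torsion Virasoro submodule. Iterated brackets \([L_{2p},L_n]\) would give the required control.
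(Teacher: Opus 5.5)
Your Ramond argument and your rank comparison (an injective $\sigma_p$-semilinear $G_p$ on both parity components forces $r_0\le r_1\le r_0$) are exactly the paper's. The routes differ only in the Neveu--Schwarz step, where $L_1$ must be shown injective.

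The paper does this by hand. First, $\ker L_1$ is $\C[L_0]$-stable. Second, the relation $[L_1,L_{-1}]=-2L_0$ gives
$L_1L_{-1}^n w=-2L_{-1}^{n-1}\bigl(nL_0-\binom n2\bigr)w$ for $w\in\ker L_1$.
So for $0\ne v\in\ker L_1$, applying $L_1^N$ to any relation $\sum_{n\le N}L_{-1}^n g_n(L_0)v=0$ isolates the top term. Torsion-freeness then forces the $L_{-1}^nv$ to be $\C[L_0]$-independent, contradicting finite rank.

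You instead localize to $\C(L_0)$, filter $M_{\bar i}$ by a Virasoro composition series, and invoke the classification of simple modules finitely generated over $\C[L_0]$ to get bijectivity of $L_{2p}$ on each torsion-free factor. This is sound and yields more, namely generic bijectivity of every $L_m$ with $m\ne0$. The cost is that a few lines of $\mathfrak{sl}_2$-type computation are replaced by a deep classification.

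It also needs one patch you did not supply. Guo--Liu--L\"u--Zhao treat the centerless Witt algebra, so in the centrally extended case you must first show that $C$ acts by zero on each nontrivial torsion-free Virasoro factor. One way is the weighting-functor argument of \Cref{prop:uniform-central-charge} together with Mathieu's classification. The paper's computation avoids this step, since $[L_1,L_{-1}]$ has no central term.

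The finite-length input, by contrast, needs no citation. The rank induction in the proof of \Cref{prop:finite-generated-finite-length} uses only that finite-dimensional modules are trivial.

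Finally, your fallback idea points the wrong way. A nonzero vector of $\ker L_1$ does not generate torsion; it generates infinite $\C[L_0]$-rank, and that is the contradiction to aim for.
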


\begin{proof}
Take \(c=0\) in the centerless case.
In the Ramond case, \(G_0^2=-(L_0+c/24)\), so \(G_0\) is
injective because \(M\) is torsion-free over \(\C[L_0]\).
In the Neveu--Schwarz case, if \(0\ne v\in\ker L_1\), the
commutation relations and \(\C[L_0]\)-torsion-freeness imply that
the vectors \(L_{-1}^nv\), \(n\ge0\), are linearly independent
over \(\C[L_0]\), contradicting the finite rank of \(M\).
Thus \(L_1\) is injective, and \(G_{1/2}^2=-L_1\) implies that
\(G_{1/2}\) is injective as well.
Since \(G_\epsilon f(L_0)=f(L_0-\epsilon)G_\epsilon\), the injective
operator \(G_\epsilon\) preserves \(\C[L_0]\)-linear independence.
Applying this to both parity components shows that their ranks are equal.
\end{proof}

\begin{remark}
\label{rem:H-Cartan-rank}
Suppose that \(P\) has a \(\C[D]\)-basis \(v_1,\ldots,v_r\).
By \eqref{eq:H-even-components} with
\(m=0\), the sets \(\{v_1,\ldots,v_r\}\) and
\(\{\overline{v_1},\ldots,\overline{v_r}\}\) form \(\C[L_0]\)-bases
of the even and odd components of \(H_{\epsilon,b}(P)\), respectively.
For the centerless Ramond algebra, put
\(\mathfrak h_R=\C L_0\oplus\C G_0\).
Since \(G_0^2=-L_0\) and \(G_0\overline{v_i}=-v_i\), the vectors
\(\overline{v_1},\ldots,\overline{v_r}\) form a
\(U(\mathfrak h_R)\)-basis of \(H_{0,b}(P)\).
After parity change, this basis is even.
\end{remark}

\begin{proposition}
\label{prop:unified-H-simple}
Let \(P\) be a simple \(\D\)-module finite free over \(\C[D]\).
\begin{enumerate}
\item \(H_{\epsilon,b}(P)\) is simple if and only if
\(b\ne1/2\).
\item The module \(H_{\epsilon,1/2}(P)\) contains the simple
Cartan-free submodule
\begin{equation}
 K_\epsilon(P)=P\oplus\overline{DP}\simeq\Pi H_{\epsilon,0}(P),
 \label{eq:exceptional-parity-zero}
\end{equation}
with purely odd trivial quotient \(\Pi(P/DP)\).
Moreover, \(H_{\epsilon,1/2}(P)\) has no nontrivial simple quotient.
\end{enumerate}
\end{proposition}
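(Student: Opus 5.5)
The plan is a direct submodule analysis for part (i) that exhibits where \(b=\tfrac12\) enters, and then explicit formula checks for part (ii). Throughout I use that \(P\) is free over \(\C[D]\), so every operator \(D+c\) (\(c\in\C\)) is injective on \(P\), and that \(t^{\pm1}\) act bijectively.

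\emph{Part (i).} Let \(N\subseteq H_{\epsilon,b}(P)\) be a nonzero (graded) submodule. Write \(N=N_{\bar0}\oplus N_{\bar1}\) with \(N_{\bar0}\subseteq P\), and put \(Q=\{v\in P:\overline v\in N_{\bar1}\}\).

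1. Both parts are nonzero. If \(N_{\bar1}=0\) and \(0\ne v\in N_{\bar0}\), then \(G_pv=0\) forces \(t^{p-\epsilon}(D+2(p-\epsilon)b)v=0\), which is impossible. If \(N_{\bar0}=0\), then \(G_p\overline v=-t^{p+\epsilon}v\ne0\) gives a contradiction.

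2. Transfer between parities. From \eqref{eq:H-odd-components} we get \(t^kQ\subseteq N_{\bar0}\) for all \(k\in\Z\). Applying \(G_q\) to \(N_{\bar0}\) gives \(t^j(D+2jb)N_{\bar0}\subseteq Q\) for all \(j\in\Z\).

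3. Combining step 2 with itself gives
\[
 t^j(D+2jb)t^k Q=t^{j+k}(D+k+2jb)Q\subseteq Q .
\]
Fix \(n=j+k\) and compare two splittings \(k\neq k'\). The difference of the two operators is \((k-k')(1-2b)\,t^n\). This is the key step, and exactly where \(b\ne\tfrac12\) is used: it yields \(t^nQ\subseteq Q\) for every \(n\).

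4. Also \(L_0\) acts on \(\overline P\) by \(D+\text{const}\), so \(DQ\subseteq Q\). Hence \(Q\) is a nonzero \(\D\)-submodule of \(P\), and simplicity of \(P\) gives \(Q=P\).

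5. Then step 2 gives \(N_{\bar0}\supseteq t^kP=P\), so \(N=H_{\epsilon,b}(P)\).

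For the converse direction of (i), part (ii) exhibits a proper nonzero submodule when \(b=\tfrac12\).

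\emph{Part (ii), the submodule \(K_\epsilon(P)\).} Put \(b=\tfrac12\) and check closure of \(K_\epsilon(P)=P\oplus\overline{DP}\) using \(t^mf(D)=f(D-m)t^m\):
\begin{itemize}
\item \(G_pv=\overline{t^{p-\epsilon}(D+p-\epsilon)v}=\overline{D\,t^{p-\epsilon}v}\in\overline{DP}\).
\item \(L_m\overline{Dv}=\overline{t^m(D+m)Dv}=\overline{D\,t^mDv}\in\overline{DP}\).
\item \(G_p\overline{Dv}\in P\), and \(P\) is stable under all \(L_m\).
\end{itemize}
The quotient is \(\overline{P}/\overline{DP}\), which is purely odd. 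All operators act on it by zero, because they map \(\overline P\) into \(P\oplus\overline{DP}\); so it is \(\Pi(P/DP)\) with trivial action. For the isomorphism \(K_\epsilon(P)\simeq\Pi H_{\epsilon,0}(P)\), use that \(D\colon P\to DP\) is a bijection. Send \(\overline{Dv}\in\overline{DP}\) to \(v\), the even copy in \(H_{\epsilon,0}(P)\), which becomes odd after \(\Pi\). Send \(v\in P\) to a suitable scalar multiple of \(\overline v\). Then compare the formulas of \cref{def:unified-H} at \(b=\tfrac12\) and \(b=0\), including the sign in the definition of \(\Pi\). Simplicity of \(K_\epsilon(P)\) then follows from part (i) with \(b=0\).

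\emph{Part (ii), no nontrivial simple quotient.} Suppose \(N\) is a proper submodule with simple nontrivial quotient.
\begin{itemize}
\item If \(K_\epsilon(P)\not\subseteq N\), then \(N\cap K_\epsilon(P)=0\) because \(K_\epsilon(P)\) is simple. So \(N\) embeds into the trivial module \(\Pi(P/DP)\), hence \(\mathfrak g^\epsilon\) acts trivially on \(N\). But \(H_{\epsilon,1/2}(P)\) has no nonzero vector killed by all \(G_p\), by the injectivity in step 1. So \(N=0\). This is impossible, since \(H_{\epsilon,1/2}(P)\) is not simple.
\item If \(K_\epsilon(P)\subseteq N\), the quotient is a quotient of the trivial module \(\Pi(P/DP)\), hence trivial.
\end{itemize}

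The main obstacle is step 3 of part (i): choosing the right compositions of odd operators so that the scalar \(1-2b\) appears. The other steps are routine checks against the formulas in \cref{def:unified-H}.
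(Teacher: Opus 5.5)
Your proof of (i) is correct and differs from the paper's. The paper gives no direct argument: it invokes Theorem~4.5 of Chen--Dai--Liu--Pei after noting that finite freeness excludes $\C[t^{\pm1}]$ and gives $DP\ne P$. Your difference trick, $t^{n}\bigl(D+2nb+k(1-2b)\bigr)Q\subseteq Q$ for all $k\in\Z$, is a self-contained proof. It shows exactly where $b\ne\frac12$ enters, and it uses freeness only for the injectivity of $D+c$. It also lets you deduce the simplicity of $K_\epsilon(P)$ from (i) at $b=0$, where the paper cites the proof of their Lemma~4.4. Your checks that $K_\epsilon(P)$ is a submodule with trivial odd quotient are right. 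Your argument that $H_{\epsilon,1/2}(P)$ has no nontrivial simple quotient is essentially the paper's: every nonzero submodule meets $K_\epsilon(P)$, because no nonzero vector is killed by all $G_p$.

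One step fails as written. In the Neveu--Schwarz case, the isomorphism $K_\epsilon(P)\simeq\Pi H_{\epsilon,0}(P)$ cannot send $v\in P$ to a scalar multiple of $\Pi\overline v$. At $b=\frac12$, $L_0$ acts on $P\subseteq K_\epsilon(P)$ by $L_0v=(D-\epsilon)v$. In $\Pi H_{\epsilon,0}(P)$, however, $L_0\,\Pi\overline v=\Pi\overline{(D+\epsilon)v}$, so no scalar works when $\epsilon=\frac12$. Keep $\overline{Dw}\mapsto\Pi w$ on the odd part. Then compatibility with $G_p$ on even vectors forces $v\mapsto\Pi\overline{t^{-2\epsilon}v}$. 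Using $(D+c)t^{-1}=t^{-1}(D+c-1)$, one checks that this is an even isomorphism for both values of $\epsilon$; for $\epsilon=0$ it is your map with scalar $1$. Your simplicity of $K_\epsilon(P)$ rests on this isomorphism, so you need the corrected map, or else rerun your steps 1--5 directly on $K_\epsilon(P)$.

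Two smaller points should also be stated explicitly. First, $DP\ne P$ holds because $P$ is free of positive rank over $\C[D]$; you need this for $K_\epsilon(P)$ to be proper, both in the converse of (i) and in the last step. Second, $K_\epsilon(P)$ is Cartan-free: $L_0$ acts by $D-\epsilon$ on $P$ and by $D$ on $\overline{DP}$, and $D\colon P\to DP$ is a $\C[D]$-isomorphism.
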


\begin{proof}
For \textup{(i)}, finite \(\C[D]\)-freeness excludes the natural
Laurent module \(\C[t^{\pm1}]\) and gives \(DP\ne P\), so
\cite[Theorem~4.5]{ChenDaiLiuPei2024} applies.
For \textup{(ii)}, the simplicity of \(K_\epsilon(P)\) and
\eqref{eq:exceptional-parity-zero} follow from
\cite[proof of Lemma~4.4]{ChenDaiLiuPei2024}.
Its Cartan freeness follows from \Cref{rem:H-Cartan-rank}, and
the quotient is determined by the defining actions.
Since \(G_{-\epsilon}\overline v=-v\), every nonzero submodule
of \(H_{\epsilon,1/2}(P)\) meets \(K_\epsilon(P)\) nontrivially
and hence contains it.  Thus every simple quotient is trivial.
\end{proof}

\begin{corollary}
\label{cor:unified-H-uniqueness}
Let \(P,P'\) be simple \(\D\)-modules finite free over
\(\C[D]\).  For \(b,b'\ne1/2\) and
\(\delta,\delta'\in\Ztwo\), we have
\[
 \Pi^\delta H_{\epsilon,b}(P)
 \simeq\Pi^{\delta'}H_{\epsilon,b'}(P')
\]
if and only if \(\delta=\delta'\), \(b=b'\), and \(P\simeq P'\).
\end{corollary}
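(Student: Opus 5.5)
The "if" direction is trivial. For the converse, fix an even isomorphism \(\phi\colon\Pi^\delta H_{\epsilon,b}(P)\to\Pi^{\delta'}H_{\epsilon,b'}(P')\). I would first determine \(\delta\) and then recover \(b\) and \(P\) directly from the defining formulas \eqref{eq:H-even-components} and \eqref{eq:H-odd-components}. This avoids citing the isomorphism criterion of \cite{ChenDaiLiuPei2024}, although that criterion would be the fallback.

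\textbf{Step 1: parity.} I need an intrinsic way to tell the summand \(P\) from \(\overline P\). The formulas give \(G_p\overline v=-t^{p+\epsilon}v\), so on the odd copy every \(G_p\) acts invertibly onto \(P\). On \(P\), by contrast, \(G_{\epsilon}v=\overline{Dv}\) has image \(\overline{DP}\), which is proper because \(DP\ne P\) (as noted in the proof of \cref{prop:unified-H-simple}). The invariant to use is the pair of images of \(G_\epsilon\) on the two parity components:
\begin{itemize}
\item[] in \(H_{\epsilon,b}(P)\), \(G_\epsilon\) maps the odd part onto the even part, while its image in the odd part is \(\overline{(D+0)P}=\overline{DP}\), a proper subspace.
\end{itemize}
Since \(\phi\) is even and commutes with \(G_\epsilon\), surjectivity of \(G_\epsilon\) from one component onto the other is preserved. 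Therefore \(\delta=\delta'\).

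\textbf{Step 2: recover \(b\) and \(P\).} We may now take \(\delta=\delta'=0\), so \(\phi\) restricts to a linear isomorphism \(\phi_0\colon P\to P'\) and \(\phi_1\colon\overline P\to\overline{P'}\). From \(G_{p}\overline v=-t^{p+\epsilon}v\) we get
\[
\phi_0(t^{p+\epsilon}v)=t^{p+\epsilon}\,\overline{\phi}^{-1}\text{-transport},
\]
more precisely \(\phi_0(t^{k}v)=t^{k}\psi(v)\) for every \(k\in\Z\), where \(\psi\colon P\to P'\) is defined by \(\phi_1(\overline v)=\overline{\psi(v)}\).
\begin{itemize}
\item[] Taking \(k=0\) gives \(\phi_0=\psi\), so \(\psi\) commutes with \(t^{\pm1}\).
\item[] Then \(L_0\) acting on \(P\) by \(D-2\epsilon b\) yields \(\psi(D-2\epsilon b)=(D-2\epsilon b')\psi\).
\item[] \(L_m\) acting on \(P\) by \(t^m(D+(m-2\epsilon)b)\), together with commutation with \(t^m\), yields \(\psi(D+(m-2\epsilon)b)=(D+(m-2\epsilon)b')\psi\) for all \(m\).
\end{itemize}
Subtracting the identities for \(m=0\) and \(m=1\) gives \(b=b'\). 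Then \(\psi D=D\psi\), so \(\psi\) is a \(\D\)-isomorphism \(P\simeq P'\).

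\textbf{Main obstacle.} Step 1 is the delicate part: it needs the image of \(G_\epsilon\) on the odd component to be proper, which rests on \(DP\ne P\), and it needs the same property to be rigid under the parity shift. If this count works out differently than expected, I would instead distinguish parity by the \(\C[L_0]\)-structure: the odd component of \(H_{\epsilon,b}(P)\) carries \(L_0\) as \(D-2\epsilon b+\epsilon\), and the even component carries it as \(D-2\epsilon b\). The fallback argument compares these shifts together with \(b\), again using \(b\ne\tfrac12\) to exclude the coincidence \(K_\epsilon(P)\simeq\Pi H_{\epsilon,0}(P)\) seen in \cref{prop:unified-H-simple}.
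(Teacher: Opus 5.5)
Your proof is correct. Step~1 is exactly the paper's argument: \(G_\epsilon\overline v=-t^{2\epsilon}v\) maps the odd component onto the even one, while \(G_\epsilon v=\overline{Dv}\) has image \(\overline{DP}\ne\overline P\). So the ``main obstacle'' is not one, and you can drop the fallback via \(L_0\)-shifts together with the remark about \(K_\epsilon(P)\). That remark is a red herring, since \(K_\epsilon(P)\) is not itself of the form \(H_{\epsilon,b}(P)\).

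The genuine difference is Step~2. The paper cites \cite[Lemmas~4.6--4.7]{ChenDaiLiuPei2024} for \(b=b'\) and \(P\simeq P'\); you compute directly instead:
\begin{itemize}
\item Compatibility with all \(G_p\) on \(\overline P\) gives \(\phi(t^kv)=t^k\psi(v)\) for every \(k\in\Z\), since \(p+\epsilon\) runs over all of \(\Z\) in both sectors. Taking \(k=0\) gives \(\phi|_P=\psi\), so \(\psi\) commutes with \(t^{\pm1}\).
\item Compatibility with \(L_m\) on \(P\) then gives \(\psi D-D\psi=(m-2\epsilon)(b'-b)\psi\). Comparing \(m=0\) and \(m=1\), and using \(\psi\ne0\), forces \(b=b'\) and \([\psi,D]=0\).
\end{itemize}
This is the same computation the paper later uses for \cref{prop:H-fully-faithful}, run here with two different parameters.

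Your route is self-contained and proves slightly more than the statement. It uses neither the simplicity of \(P,P'\) nor the hypothesis \(b,b'\ne\tfrac12\), only \(P\ne0\) and \(DP\ne P\), both automatic for nonzero finite free \(P\). The paper's citation is shorter but hides this.

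Two small cleanups are needed. First, replace the garbled display involving ``\(\overline{\phi}^{-1}\)-transport'' by the precise identity \(\phi(t^kv)=t^k\psi(v)\). Second, justify the reduction to \(\delta=\delta'=0\) when \(\delta=\delta'=1\): an even isomorphism \(\Pi M\to\Pi N\) is the same as an even isomorphism \(M\to N\), because the two signs \((-1)^{|x|}\) cancel.
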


\begin{proof}
By \eqref{eq:H-odd-components}, \(G_\epsilon\) maps the odd component
of \(H_{\epsilon,b}(P)\) onto its even component, but not conversely,
since \(DP\ne P\).  The same holds for \(H_{\epsilon,b'}(P')\).
Thus an isomorphism as above forces \(\delta=\delta'\).
The remaining assertions follow from
\cite[Lemmas~4.6--4.7]{ChenDaiLiuPei2024}.
\end{proof}

We recall a matrix description of \(\D\)-modules that are finite
free over \(\C[D]\).
Put
\[
 \sigma(f(D))=f(D-1),\qquad f(D)\in\C(D),
\]
and apply \(\sigma\) entrywise to vectors and matrices.
Thus \(\D\) is the skew Laurent polynomial ring
\(\C[D][t^{\pm1};\sigma]\).
By \cite[Proposition~5.1 and Lemma~5.2]{LopesNilsson2025},
every \(\D\)-module free of finite rank \(r\) over \(\C[D]\) is
isomorphic to \(P_A=\C[D]^r\) with action
\begin{equation}
 t\cdot v=A\sigma(v),\qquad
 t^{-1}\cdot v=\sigma^{-1}(A^{-1}v),
\label{eq:coefficient-matrix-action}
\end{equation}
for some \(A\in\operatorname{GL}_r(\C[D])\).
Moreover, \(P_A\simeq P_B\) if and only if
\begin{equation*}
 B=SA\,\sigma(S)^{-1},
\end{equation*}
for some \(S\in\operatorname{GL}_r(\C[D])\).
In the notation of \cite{LopesNilsson2025}, we take \(a=1\),
\(x=t\), \(y=t^{-1}\), and the automorphism to be \(\sigma^{-1}\).
\(P_A\) is simple if and only if the difference module
\((\C(D)^r,\Phi_A)\) is irreducible, where \(\Phi_A(v)=A\sigma(v)\).
Indeed, if \(W\) is a proper nonzero \(\C(D)\)-subspace
with \(\Phi_A(W)=W\), then \(W\cap\C[D]^r\) is a proper nonzero
\(\D\)-submodule, since both \(A\) and \(A^{-1}\) have polynomial
entries.  Conversely, for a \(\D\)-submodule
\(0\ne N\subsetneq P_A\), the subspace \(\C(D)N\) is nonzero and
invariant.  By \Cref{lem:weyl-finite-length}, the nonzero quotient
\(P_A/N\) is free of positive rank over \(\C[D]\), so
\(\C(D)N\) is proper.

\section{Weighting and \texorpdfstring{\(A\)-covers}{A-covers}}
\label{sec:uniform-core}

\subsection{Weighting and annihilators}

For a finite Cartan-free \(\mathfrak g^\epsilon\)-module \(M\) and
\(\lambda\in\mathbb C\), put
\begin{equation*}
 \mathcal W^\epsilon_\lambda(M)
   :=\bigoplus_{a\in\lambda+\mathbb Z+\epsilon\mathbb Z}
       M/(L_0-a)M.
\end{equation*}
The weights thus range over \(\lambda+\mathbb Z\) in the Ramond case
and \(\lambda+(1/2)\mathbb Z\) in the Neveu--Schwarz case.
The action on the quotient spaces is given by
\begin{equation*}
 \begin{aligned}
 L_m\bigl(v+(L_0-a)M\bigr)&=L_mv+(L_0-a-m)M,\\
 G_p\bigl(v+(L_0-a)M\bigr)&=G_pv+(L_0-a-p)M.
 \end{aligned}
\end{equation*}
The defining brackets make these actions well defined, just as in
\cite[Proposition~8]{Nilsson2016}.  The same construction applies to
the central extension, with \(C\) acting on each quotient as on \(M\).
Since \(M\) is finite free over \(\mathbb C[L_0]\), each weight space
has dimension
\begin{equation*}
 \dim\mathcal W^\epsilon_\lambda(M)_a
   =\operatorname{rank}_{\mathbb C[L_0]}M,
 \qquad a\in\lambda+\mathbb Z+\epsilon\mathbb Z.
\end{equation*}
Thus \(\mathcal W^\epsilon_\lambda(M)\) is uniformly bounded.

\begin{proposition}
\label{prop:uniform-central-charge}
Let \(M\) be a nontrivial simple
\(\widehat{\mathfrak g}^{\,\epsilon}\)-module which is finite Cartan-free.
Then \(C\) acts trivially on \(M\).
\end{proposition}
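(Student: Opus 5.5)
Since \(M\) is simple, \(C\) acts by a scalar \(c\). To justify this we need a Schur lemma, and \(M\) has countable dimension over \(\C\): it is free of finite rank over \(\C[L_0]\). So the even endomorphism \(C\) of the simple module \(M\) is a scalar \(c\) by Dixmier's version of Schur's lemma. The task is then to show \(c=0\).

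The plan is to pass to the weighting \(\mathcal W:=\mathcal W^\epsilon_\lambda(M)\) for a suitable \(\lambda\). This is a weight \(\widehat{\mathfrak g}^{\,\epsilon}\)-module. Its weight spaces all have dimension \(r=\operatorname{rank}_{\C[L_0]}M\), and \(r\) is positive because \(M\) is nontrivial and torsion-free. The center \(C\) acts on \(\mathcal W\) by the same scalar \(c\).

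Next I use the standard fact that a uniformly bounded weight module over the Virasoro algebra has trivial central charge, once one passes to a nontrivial subquotient. Restricting \(\mathcal W\) to the Virasoro subalgebra spanned by \(L_m\) and \(C\) gives a weight Virasoro module with all weight spaces of dimension \(r\). This module has finite length up to trivial pieces, and I take a nontrivial simple subquotient \(S\). Since its weight spaces are uniformly bounded, \(S\) is not a highest or lowest weight module of the kind with unbounded weight multiplicities. By Mathieu's classification, or directly by the theorem that simple weight Virasoro modules with uniformly bounded weight spaces are intermediate series modules (Kaplansky–Santharoubane, Mathieu), \(C\) acts by \(0\) on \(S\). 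Hence \(c=0\).

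The main obstacle is securing a \emph{nontrivial} Virasoro subquotient, since trivial one-dimensional subquotients carry no information about \(c\). I will handle this as follows. The module \(\mathcal W\) has weight support in all of \(\lambda+\Z\), each weight space of dimension \(r\geq1\), so \(\mathcal W\) is infinite-dimensional. A module of this kind cannot be built only from trivial subquotients with finitely many of them in each weight. Concretely, if all Virasoro subquotients in a composition-type filtration of the \(\lambda+\Z\) part were trivial, then \(L_m\) with \(m\neq0\) would act nilpotently in a way incompatible with the following observation. In \(M\) itself, the relation \([L_m,L_{-m}]=-2mL_0+\frac{m^3-m}{12}c\) shows that the commutator acts injectively on the torsion-free \(\C[L_0]\)-module \(M\), so some \(L_m\) acts nonzero on \(M\). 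I expect this to give nonzero action on the weighting generically, for instance by choosing \(\lambda\) so that no relevant polynomial evaluation vanishes. As an alternative that avoids the subquotient bookkeeping, I would compute directly on weight spaces. I would take traces of \([L_m,L_{-m}]\) on a fixed finite-dimensional weight space \(\mathcal W_a\) and compare with traces through neighboring weight spaces. Using \(\operatorname{tr}(L_mL_{-m}|_{\mathcal W_a})=\operatorname{tr}(L_{-m}L_m|_{\mathcal W_{a-m}})\), which is a standard argument, this yields \(r\bigl(-2ma+\frac{m^3-m}{12}c\bigr)\) as a function of \(m\). Uniform boundedness then forces the cubic coefficient to vanish, and since \(r\geq1\), this gives \(c=0\).
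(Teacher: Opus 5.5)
Your main line (Dixmier's lemma, the weighting \(\mathcal W^\epsilon_\lambda(M)\), a simple subquotient with uniformly bounded weight spaces, then a classification theorem) is the paper's argument. There is one small difference. The paper applies the super-Virasoro Harish--Chandra classification to a simple quotient of a cyclic graded \(\widehat{\mathfrak g}^{\,\epsilon}\)-submodule. You instead restrict to the Virasoro subalgebra and use Mathieu's classification of uniformly bounded simple Virasoro modules. That variation is fine and needs only the purely even theory.

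\textbf{The ``main obstacle'' paragraph is wrong.} It rests on a false premise, and both remedies you propose fail. A one-dimensional subquotient does detect \(c\). With the paper's bracket,
\[
 C=2[L_2,L_{-2}]-4[L_1,L_{-1}],
\]
so \(C\) acts by \(0\) on every one-dimensional module, while it acts by \(c\) on every subquotient of \(\mathcal W^\epsilon_\lambda(M)\). Hence any simple subquotient, trivial or not, forces \(c=0\). You therefore need neither a nontrivial subquotient nor the unproved claim of ``finite length up to trivial pieces.'' A simple quotient of the cyclic submodule generated by a nonzero weight vector exists by Zorn's lemma, and that suffices.

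\textbf{Neither remedy works.} The ``generic \(\lambda\)'' remark is not an argument. The trace argument is invalid. Put \(\phi_m(a)=\operatorname{tr}(L_mL_{-m}|_{\mathcal W_a})\). Cyclicity of the trace gives only
\[
 \phi_m(a)-\phi_m(a+m)=r\bigl(-2ma+\tfrac{m^3-m}{12}c\bigr).
\]
For every value of \(c\), this is solved by
\[
 \phi_m(a)=ra^2-r\bigl(m+\tfrac{m^2-1}{12}c\bigr)a,
\]
which is even polynomial in \(a\), as traces coming from a free \(\C[L_0]\)-module must be. Bounding the dimensions of weight spaces places no bound on traces, so nothing forces the cubic coefficient to vanish.

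Replace that final paragraph with the derived-algebra observation above, and the proof is complete.
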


\begin{proof}
Since \(M\) is countable dimensional, Dixmier's lemma
\cite{Dixmier1977}, applied to even endomorphisms, gives
\(C|_M=c\,\mathrm{id}_M\) for some \(c\in\mathbb C\).
A nonzero cyclic graded submodule of
\(\mathcal W^\epsilon_\lambda(M)\) has a simple quotient, which is
uniformly bounded and has the same central charge \(c\).
The Harish--Chandra classification
\cite{Su1995,CaiLiuLu2021,CaiLu2022} therefore gives \(c=0\).
\end{proof}

We henceforth work with the centerless algebra \(\mathfrak g^\epsilon\).
Throughout this section, indices are understood to lie in the ranges
of the corresponding generators.
For \(X,Y\in\{L,G\}\), \(N\geq0\), and indices \(k,p\),
we consider the following finite-difference operators in
\(U(\mathfrak g^\epsilon)\):
\begin{equation*}
 \Omega^{(N)}_{X,Y}(k,p)
   :=\sum_{i=0}^{N}(-1)^i\binom Ni
       X_{k-i}Y_{p+i}.
\end{equation*}
Every summand has \(L_0\)-degree \(k+p\).  Pascal's identity gives
\begin{equation}
 \Omega^{(N+1)}_{X,Y}(k,p)
 =\Omega^{(N)}_{X,Y}(k,p)
  -\Omega^{(N)}_{X,Y}(k-1,p+1).
 \label{eq:uniform-higher-difference}
\end{equation}
Hence annihilation at order \(N\) implies annihilation at every higher
order.

\begin{lemma}
\label{lem:uniform-four-differentiators-weight}
There exists \(N\geq2\) such that
\begin{equation*}
 \Omega^{(N)}_{X,Y}(k,p)
    \mathcal W^\epsilon_\lambda(M)=0
\end{equation*}
for all \(X,Y\in\{L,G\}\) and indices \(k,p\).
\end{lemma}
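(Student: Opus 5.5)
The plan is to transfer the classical annihilation result for uniformly bounded modules to the weighting \(\mathcal W^\epsilon_\lambda(M)\); this is essentially the Billig--Futorny argument. Put \(W=\mathcal W^\epsilon_\lambda(M)\). It is a weight module, every weight space has dimension \(r=\operatorname{rank}_{\C[L_0]}M\), and \(C\) acts trivially on it by \cref{prop:uniform-central-charge}. I will first get the even--even case \(X=Y=L\), and then obtain the other three cases from it by bracketing with odd elements.

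\emph{Step 1 (the case \(X=Y=L\)).} Restricted to the Virasoro subalgebra \(\{L_m\}\), \(W\) is a direct sum of at most two or three uniformly bounded Witt modules, one for each coset of \(\Z\) in \(\lambda+\Z+\epsilon\Z\). Each has weight multiplicities at most \(r\). Billig--Futorny \cite{BilligFutorny2016} prove that for any uniformly bounded Witt module of bound \(r\) there is an \(N\), depending only on \(r\), such that \(\Omega^{(N)}_{L,L}(k,p)\) annihilates the module for all \(k,p\). Their argument uses the \(\mathfrak{sl}_2\)-type subalgebras and the fact that a uniformly bounded module has only finitely many trivial composition factors. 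I will quote this result, with \(N\) chosen at least \(2\) and large enough for every coset. By \eqref{eq:uniform-higher-difference}, enlarging \(N\) does no harm.

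\emph{Step 2 (mixed and odd cases).} Take the commutator of \(\Omega^{(N)}_{L,L}(k,p)\) with \(G_q\). This gives a combination of terms \(L_{k-i}G_{p+i+q}\) and \(G_{k-i+q}L_{p+i}\), with coefficients \(\bigl(q-\tfrac{p+i}{2}\bigr)\) or \(\bigl(q-\tfrac{k-i}{2}\bigr)\) multiplied by \((-1)^i\binom Ni\). These coefficients are polynomials of degree \(\le1\) in \(i\). Taking finite differences in the auxiliary parameter \(q\), and using \eqref{eq:uniform-higher-difference} to absorb the linear factors by raising the order to \(N+2\), will isolate \(\Omega^{(N')}_{L,G}\) and \(\Omega^{(N')}_{G,L}\) modulo operators already known to annihilate \(W\). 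For this, note that \(G_aL_b=L_bG_a-(a-\tfrac b2)G_{a+b}\). Hence \(\Omega_{G,L}\) and \(\Omega_{L,G}\) differ by \(\sum_i(-1)^i\binom Ni(\text{linear in } i)G_{k+p}\), which vanishes for \(N\ge2\). The requirement \(N\ge2\) enters exactly here. Bracketing once more with \(G_s\) handles \(\Omega_{G,G}\) in the same way: the anticommutator produces \(L\)'s, which reduces that case to the ones already done.

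I expect the main obstacle to be Step 2. There, one has to check that commutators of annihilating operators give exactly the desired difference operators, and not just some combination of them. Since \(W\) is merely annihilated, these operators need not be central, so the bookkeeping has to work within the annihilator, which is a left ideal that is stable under the adjoint action. My first attempt will be the generating-function trick: encode \(\sum_i(-1)^i\binom Ni X_{k-i}Y_{p+i}\) as the coefficient extraction of \((1-z)^N X(z)Y(z)\). Then \(\operatorname{ad}G_q\) multiplies by explicit polynomials in \(z\frac{d}{dz}\), and at the cost of raising \(N\) by a bounded amount these become finite differences.
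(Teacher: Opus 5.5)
Your plan is sound, but at the central step it takes a different route from the paper.

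\textbf{The paper's route.} The paper checks, via Mathieu's classification, that the Witt restriction of \(\mathcal W^\epsilon_\lambda(M)\) has finite length. It then quotes Cai--Liu--L\"u (Ramond) and Cai--L\"u (Neveu--Schwarz) for annihilation by both \(\Omega_{L,L}\) and \(\Omega_{G,L}\). By hand it does only two things. The first is the reindexing \(\Omega^{(N)}_{G,L}(k,p)=(-1)^N\Omega^{(N)}_{L,G}(p+N,k-N)\) for \(N\ge2\), which is your remark on \(G_aL_b\). The second is the bracket identity writing \(\tfrac32\Omega^{(N)}_{G,G}(k,s)\) as \([\Omega^{(N)}_{G,L}(k,0),G_s]-[\Omega^{(N)}_{G,L}(k,1),G_{s-1}]+2\Omega^{(N+1)}_{L,L}(k+s,0)\), which is your last sentence of Step 2.

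\textbf{Your route.} You use only the even input and derive the mixed case formally. Your Step 2 does close, provided the differences are taken in the right directions. Fix \(c=k+q\) and \(p\), and put \(R_q=[\Omega^{(N)}_{L,L}(c-q,p),G_q]\). Its \(GL\)-part \(\sum_i(-1)^i\binom Ni(\tfrac{3q}2-\tfrac{c-i}2)G_{c-i}L_{p+i}\) has fixed operators and is linear in \(q\). So a second difference removes it, and a short computation gives
\[
 R_{q+2}-2R_{q+1}+R_q
 =\bigl(q-\tfrac p2\bigr)\Omega^{(N+2)}_{L,G}(c-q,p+q)
 -\bigl(2-\tfrac N2\bigr)\Omega^{(N+1)}_{L,G}(c-q-1,p+q+1).
\]
Now hold \(c-q\) and \(p+q\) fixed and vary \(q\). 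The coefficient \(q-\tfrac p2\) then has slope \(\tfrac32\), and the second coefficient is constant. One more difference therefore shows that \(\Omega^{(N+2)}_{L,G}\) annihilates \(\mathcal W^\epsilon_\lambda(M)\), at exactly the order you predicted. A plain difference in \(q\) with \(k,p\) fixed does not work, because both parts slide.

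\textbf{Comparison.} Your route makes the odd differentiators a formal consequence of the even ones. The key point is that \([L_m,G_q]=(q-\tfrac m2)G_{m+q}\), written in terms of the output index \(m+q\), has nonzero \(q\)-slope. So you need only the Billig--Futorny result for uniformly bounded Witt modules and none of the superalgebra-specific lemmas. The paper's route is shorter on the page but relies on more external input.

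\textbf{Minor point.} Do not invoke \cref{prop:uniform-central-charge} here. It requires simplicity, whereas the lemma is applied in \cref{thm:uniform-cover-finite} to non-simple \(M\). The centerless setting already disposes of \(C\).
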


\begin{proof}
The restriction of \(\mathcal W^\epsilon_\lambda(M)\) to the Witt
algebra spanned by the \(L_n\) is uniformly bounded and supported on
at most two cosets of \(\mathbb Z\).  Every nonzero subquotient
supported on a coset \(\alpha+\mathbb Z\) has a simple subquotient
and hence, by Mathieu's classification \cite{Mathieu1992}, a nonzero
weight space at \(\alpha\) if \(\alpha\notin\mathbb Z\), or at
\(0\) or \(1\) otherwise.  By additivity, the dimensions at these
finitely many weights bound the length of every strict submodule
chain.  Thus the restriction has finite composition length.

Applying
\cite[Lemmas~3.6--3.7]{CaiLiuLu2021} in the Ramond case and
\cite[Lemmas~2.2(1) and~4.3(1)]{CaiLu2022} in the Neveu--Schwarz
case, we find that the operators \(\Omega^{(N)}_{L,L}(k,p)\) and
\(\Omega^{(N)}_{G,L}(k,p)\) annihilate
\(\mathcal W^\epsilon_\lambda(M)\) at sufficiently large orders.  By
\eqref{eq:uniform-higher-difference}, we may choose a common order
\(N\geq2\).
For \(k\in\mathbb Z+\epsilon\) and \(p\in\mathbb Z\), the
commutation relations give
\begin{equation*}
 \Omega^{(N)}_{G,L}(k,p)
  =(-1)^N\Omega^{(N)}_{L,G}(p+N,k-N).
\end{equation*}
Finally, for \(k,s\in\mathbb Z+\epsilon\), we have
\[
 \frac32\Omega^{(N)}_{G,G}(k,s)
 =[\Omega^{(N)}_{G,L}(k,0),G_s]
 -[\Omega^{(N)}_{G,L}(k,1),G_{s-1}]=0
\]
on \(\mathcal W^\epsilon_\lambda(M)\).
\end{proof}

\begin{lemma}
\label{lem:uniform-differentiator-lift}
There exists \(N\geq2\) such that
\begin{equation*}
 \Omega^{(N)}_{X,Y}(k,p)M=0
\end{equation*}
for all \(X,Y\in\{L,G\}\) and indices \(k,p\).
\end{lemma}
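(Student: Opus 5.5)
We deduce the statement from \cref{lem:uniform-four-differentiators-weight} by showing that an element of \(U(\mathfrak g^\epsilon)\) of fixed \(L_0\)-degree which kills \(\mathcal W^\epsilon_\lambda(M)\) for every \(\lambda\) already kills \(M\).

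First we make \(N\) independent of \(\lambda\).
In that lemma \(N\) depends only on bounds coming from the restriction to the Witt algebra, through \cite{CaiLiuLu2021,CaiLu2022}. Those bounds are controlled by the uniform weight multiplicity \(\operatorname{rank}_{\C[L_0]}M\). So we may take one \(N\) valid for all \(\lambda\in\C\) at once. If one prefers not to track the dependence, a cleaner route is to work directly over \(\C[L_0]\): it suffices to treat each fixed pair \(X,Y\) and indices \(k,p\), and \(N\) can be chosen as the maximum over the finitely many \(\lambda\)-cosets needed below. In fact I expect that the cited lemmas give an \(N\) depending only on the rank. This bookkeeping is the main point to verify.

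Now fix \(X,Y,k,p\) and put \(\Omega=\Omega^{(N)}_{X,Y}(k,p)\), so that \(\Omega f(L_0)=f(L_0-k-p)\Omega\). Let \(v\in M\) and \(w=\Omega v\). For each \(a\in\C\), choose \(\lambda\) with \(a\in\lambda+\Z+\epsilon\Z\). The class of \(v\) in \(M/(L_0-a)M\) is sent by \(\Omega\) to the class of \(w\) in \(M/(L_0-a-k-p)M\), and this class is \(0\) by \cref{lem:uniform-four-differentiators-weight}. Since \(a\) is arbitrary, \(w\in\bigcap_{b\in\C}(L_0-b)M\).

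Write \(M\simeq\C[L_0]^r\), using Cartan-freeness. An element of \(\bigcap_b(L_0-b)M\) has every coordinate divisible by \(L_0-b\) for all \(b\). A nonzero polynomial has only finitely many roots, so every coordinate is zero and \(w=0\). Hence \(\Omega M=0\).

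The only delicate step is the uniformity of \(N\) in \(\lambda\). If it fails, note that for the argument it is enough to have infinitely many \(b\), since a nonzero polynomial has finitely many roots. So it suffices to use a single \(\lambda\): the weights \(a\in\lambda+\Z+\epsilon\Z\) already form an infinite set, which makes \(N\) from one application of \cref{lem:uniform-four-differentiators-weight} suffice. This removes the obstacle entirely. I would therefore fix \(\lambda=0\) and take \(N\) from that lemma.
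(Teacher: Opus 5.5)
Your final argument is correct and is exactly the paper's proof: fix one \(\lambda\), observe that \(\Omega^{(N)}_{X,Y}(k,p)v\in(L_0-a-k-p)M\) for the infinitely many weights \(a\in\lambda+\Z+\epsilon\Z\), and use freeness of \(M\) over \(\C[L_0]\) to conclude that this intersection is zero. The preliminary discussion of making \(N\) uniform in \(\lambda\) is unnecessary and should be dropped; its suggestion that finitely many cosets would cover all \(b\in\C\) is also wrong.
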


\begin{proof}
Choose \(N\) as in \Cref{lem:uniform-four-differentiators-weight}.
Since \(\Omega^{(N)}_{X,Y}(k,p)\) is homogeneous of
\(L_0\)-degree \(k+p\) and annihilates
\(\mathcal W^\epsilon_\lambda(M)\), its action on the summands of
weights \(\lambda+n\), \(n\in\mathbb Z\), gives
\begin{equation*}
 \Omega^{(N)}_{X,Y}(k,p)v
 \in\bigcap_{n\in\mathbb Z}(L_0-\lambda-n-k-p)M,
 \qquad v\in M.
\end{equation*}
Since \(M\) is free over \(\mathbb C[L_0]\), this intersection is zero.
\end{proof}

\subsection{\texorpdfstring{\(A\)-covers}{A-covers}}

We use the \(A\)-cover construction of \cite{BilligFutorny2016}.  Let
\[
 \mathcal A=\mathbb C[t^{\pm1}]\otimes\Lambda(\xi),
 \qquad |t|=\bar0,\quad |\xi|=\bar1.
\]
In the standard contact realization, \(\mathfrak g^\epsilon\) consists
of the superderivations preserving the contact distribution
\(\mathcal A(\partial_\xi+t^{1-2\epsilon}\xi\partial_t)\)
\cite[Section~2.2]{GargoubiMellouliOvsienko2007}.
The corresponding action on \(\mathcal A\) is
\begin{align*}
 L_m\circ f
  &=t^{m+1}\partial_tf
    +\frac{m+2\epsilon}{2}t^m\xi\partial_\xi f,
\\
 G_p\circ f
  &=t^{p-\epsilon+1}\xi\partial_tf
    -t^{p+\epsilon}\partial_\xi f.
\end{align*}
Contact vector fields are canonically identified with sections of
the contact line bundle
\cite[Corollary~3.2 and Proposition~3.4]{GargoubiMellouliOvsienko2007}.
Under this identification, multiplication by functions induces the
following \(\mathcal A\)-module structure:
\begin{equation}
 \begin{aligned}
  t^iL_m&=L_{m+i},&
  t^iG_p&=G_{p+i},\\
  \xi L_m&=\frac12G_{m+\epsilon},&
  \xi G_p&=0,
 \end{aligned}
 \qquad i,m\in\mathbb Z,\quad p\in\mathbb Z+\epsilon.
 \label{eq:uniform-A-on-g}
\end{equation}
The adjoint action corresponds to the Lie derivative on these sections.
For homogeneous elements \(X,Y\in\mathfrak g^\epsilon\) and
\(a\in\mathcal A\), the graded Leibniz rule gives
\[
 [X,aY]=(X\circ a)Y+(-1)^{|X||a|}a[X,Y].
\]
An \(\mathcal A\mathfrak g^\epsilon\)-module is a module over the
semidirect product \(\mathfrak g^\epsilon\ltimes\mathcal A\).
It is called \emph{admissible} if
the \(\mathcal A\)-action is associative and unital.

For a \(\mathfrak g^\epsilon\)-module \(M\), give
\(\mathfrak g^\epsilon\otimes M\) the diagonal
\(\mathfrak g^\epsilon\)-action and the \(\mathcal A\)-action on
the first factor.  Then
\begin{equation*}
 K_\epsilon(M)=
 \left\{
  \sum_jx_j\otimes v_j\in\mathfrak g^\epsilon\otimes M:
  \sum_j(ax_j)v_j=0\text{ for every }a\in\mathcal A
 \right\}
\end{equation*}
is an \(\mathcal A\mathfrak g^\epsilon\)-submodule, and the
\(A\)-cover
\begin{equation*}
 \widehat M_\epsilon
   :=(\mathfrak g^\epsilon\otimes M)/K_\epsilon(M)
\end{equation*}
is admissible.  We write \(x\boxtimes v\) for the class of
\(x\otimes v\).
There is a natural \(\mathfrak g^\epsilon\)-homomorphism
\begin{equation*}
 \pi_M:\widehat M_\epsilon\longrightarrow M,
 \qquad x\boxtimes v\longmapsto xv,
\end{equation*}
whose image is \(\mathfrak g^\epsilon M\).  In particular, if \(M\) is
nontrivial and simple, then \(\mathfrak g^\epsilon M=M\) and \(\pi_M\)
is surjective.

\begin{theorem}
\label{thm:uniform-cover-finite}
Let \(M\) be a finite Cartan-free
\(\mathfrak g^\epsilon\)-module such that
\(\mathfrak g^\epsilon M=M\).  Then
\(\widehat M_\epsilon\) is finitely generated over
\(\mathbb C[L_0]\).
\end{theorem}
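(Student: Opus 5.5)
We follow the strategy of Billig--Futorny and Guo--Liu--L\"u--Zhao: use the annihilating difference operators of \Cref{lem:uniform-differentiator-lift} to show that \(\widehat M_\epsilon\) is spanned over \(\C[L_0]\) by finitely many elements of the form \(X_k\boxtimes v\).

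Fix a \(\C[L_0]\)-basis \(v_1,\dots,v_r\) of \(M\), and take \(N\ge2\) as in \Cref{lem:uniform-differentiator-lift}. Every element of \(\widehat M_\epsilon\) is a sum of elements \(X_k\boxtimes f(L_0)v_j\) with \(X\in\{L,G\}\). We plan to proceed in three steps.

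\emph{Step 1: annihilation relations in the cover.} For \(a=t^i\) or \(a=\xi t^i\), relation \eqref{eq:uniform-A-on-g} sends \(\sum_i(-1)^i\binom Ni X_{k-i}\otimes Y_{p+i}v\) to an operator of the same shape, namely \(\Omega^{(N)}_{X',Y}(k',p)v\), up to scalars and up to replacing \(X\) by \(G\) or by \(0\). All such operators annihilate \(M\). Hence
\begin{equation*}
 \sum_{i=0}^N(-1)^i\binom Ni X_{k-i}\boxtimes Y_{p+i}v=0\quad\text{in }\widehat M_\epsilon .
\end{equation*}

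\emph{Step 2: moving \(L_0\) through the tensor symbol.} In \(\widehat M_\epsilon\) we have
\begin{equation*}
 L_0(X_k\boxtimes v)=[L_0,X_k]\boxtimes v+X_k\boxtimes L_0v .
\end{equation*}
Therefore \(X_k\boxtimes f(L_0)v_j\) equals \(f(L_0-c_k)(X_k\boxtimes v_j)\), where \(c_k\) is the \(L_0\)-degree of \(X_k\), up to a sign convention. So \(\widehat M_\epsilon\) is spanned over \(\C[L_0]\) by the elements \(X_k\boxtimes v_j\), for all \(k\) and \(j\).

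\emph{Step 3: reducing the index \(k\) to a finite window.} This is the main obstacle, since Step~2 still leaves infinitely many indices \(k\). We plan to use the hypothesis \(\mathfrak g^\epsilon M=M\) to write each \(v_j\) as a finite sum \(\sum_s Y_{p_s}w_s\). Then \(X_k\boxtimes v_j\) is a sum of terms \(X_k\boxtimes Y_{p}w\), and the relation of Step~1 (with \(k\) shifted) expresses each such term through \(X_{k-i}\boxtimes Y_{p+i}w\) for \(1\le i\le N\). We then induct downward and upward on \(k\). To close the induction, the terms \(X_{k'}\boxtimes Y_{p+i}w\) with \(k'\) in a fixed window \([-N,N]\) must be re-expanded in \(\C[L_0]\)-combinations of the generators \(X_{k''}\boxtimes v_j\). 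This is done by writing \(Y_{p+i}w\) in the basis \(v_j\) and applying Step~2.

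There is a subtlety: the rewriting keeps the total degree \(k+p\) fixed, so shifting \(k\) forces larger \(p\). The coefficients of \(Y_{p}w\) in the basis \(v_j\) are polynomials in \(L_0\) and vary with \(p\), so we need the induction to preserve a \emph{fixed} finite set of generators. We plan to handle this as in \cite{BilligFutorny2016,GuoLiuLuZhao2020}, via a polynomiality argument. By Step~1, for each fixed \(v\) the map \(k\mapsto X_k\boxtimes v\), viewed through the degree-shift twist, satisfies a linear recurrence of order \(N\) with constant coefficients. Indeed, choosing \(Y=L\) and \(p\) near \(0\) and using \(L_0\)-eigen-shifts, the \(N\)-th finite difference of \(k\mapsto X_k\boxtimes Y_pw\) vanishes. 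Hence \(X_k\boxtimes Y_pw\) is a \(\C\)-linear combination of \(X_{k_0+i}\boxtimes Y_{p_0-i+(k-k_0)}w\) for \(0\le i<N\). Combined with the finitely many \(w_s\) used to express \(v_1,\dots,v_r\), this yields a finite \(\C[L_0]\)-generating set
\begin{equation*}
 \{X_{i}\boxtimes Y_{p_s}w_s : X\in\{L,G\},\ |i|\le N\}.
\end{equation*}
The remaining issue is checking that the \(\C[L_0]\)-span of this set is stable under the index shifts. We expect this to follow from Step~2 applied with \(M\) being \(\C[L_0]\)-finite.
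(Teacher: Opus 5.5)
Your Steps 1 and 2, together with the first paragraph of Step 3, already form a complete proof, and it is essentially the paper's proof. Induct on \(k\) for all second indices at once, with \(w\) fixed. This gives \(X_k\boxtimes Y_pw\in\sum_{q\in W}X_q\boxtimes M\) for a fixed finite window \(W\). Step 2 with the basis \(v_1,\dots,v_r\) then gives \(X_q\boxtimes M\subseteq\sum_j\C[L_0](X_q\boxtimes v_j)\).

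The ``subtlety'' you raise next is not real. The induction target allows an arbitrary second factor in \(M\), so no generating set has to be preserved during the induction. The final set \(\{X_q\boxtimes v_j: q\in W\}\) is fixed. That the polynomial coefficients depend on \(p\) is irrelevant for a \(\C[L_0]\)-span.

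You should delete the polynomiality paragraph, since it contains a false statement. Step 1 gives vanishing of \(N\)-th differences only along anti-diagonals \((k-i,p+i)\), not for \(k\mapsto X_k\boxtimes u\) with \(u\) fixed. The latter fails in general. Take \(M=H_{0,b}(P_\lambda)\) with \(\lambda\ne1\); then \(L_m\cdot f(D)=\lambda^m(D+m(b-1))f(D-m)\). So the leading coefficient of \(\pi_M\bigl(\sum_i(-1)^i\binom Ni L_{k-i}\boxtimes f\bigr)\) is \(\lambda^{k-N}(\lambda-1)^N\) times that of \(f\), which is nonzero.

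Also, the anti-diagonal recurrence by itself does not land in \(\{X_i\boxtimes Y_{p_s}w_s\}\), because the \(Y\)-index drifts with \(k\). That set does generate, but only because each \(X_i\boxtimes v_j\) lies in its \(\C\)-span and your first argument applies. This is precisely how the ``remaining issue'' you leave open is settled.
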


\begin{proof}
Choose \(N\geq2\) as in \Cref{lem:uniform-differentiator-lift}, and
fix \(X,Y\in\{L,G\}\), indices \(k,p\), and \(v\in M\).
The tensor
\[
 w=\sum_{i=0}^{N}(-1)^i\binom Ni
       X_{k-i}\otimes Y_{p+i}v
\]
belongs to \(K_\epsilon(M)\).  Indeed, by
\eqref{eq:uniform-A-on-g}, for every \(s\in\mathbb Z\),
\begin{align*}
 \sum_{i=0}^{N}(-1)^i\binom Ni
 (t^sX_{k-i})Y_{p+i}v
 &=\Omega^{(N)}_{X,Y}(k+s,p)v=0,\\
 \sum_{i=0}^{N}(-1)^i\binom Ni
 (t^s\xi X_{k-i})Y_{p+i}v
 &=\begin{cases}
  \frac12\Omega^{(N)}_{G,Y}(k+s+\epsilon,p)v=0,&X=L,\\
  0,&X=G.
 \end{cases}
\end{align*}
Hence
\begin{equation}
 \sum_{i=0}^{N}(-1)^i\binom Ni
 X_{k-i}\boxtimes Y_{p+i}v=0.
 \label{eq:uniform-cover-difference}
\end{equation}

Since \(M=\mathfrak g^\epsilon M\), the cover is spanned by
\(X_k\boxtimes Y_pv\).  For \(X=L\),
\eqref{eq:uniform-cover-difference} gives
\[
 L_k\boxtimes Y_pv=
 \begin{cases}
  \displaystyle\sum_{i=1}^{N}(-1)^{i+1}\binom Ni
  L_{k-i}\boxtimes Y_{p+i}v,&k\geq N,\\[6pt]
  \displaystyle\sum_{i=1}^{N}(-1)^{i+1}\binom Ni
  L_{k+i}\boxtimes Y_{p-i}v,&k<0.
 \end{cases}
\]
Hence, by induction,
\[
 L_k\boxtimes Y_pv
 \in\sum_{q=0}^{N-1}L_q\boxtimes M,
 \qquad k\in\mathbb Z.
\]
The same argument for \(X=G\) leaves only
\(\epsilon,\ldots,\epsilon+N-1\).  Hence
\begin{equation}
 \widehat M_\epsilon
   =\sum_{q=0}^{N-1}
       \bigl(L_q\boxtimes M+G_{q+\epsilon}\boxtimes M\bigr).
 \label{eq:uniform-cover-bounded-index}
\end{equation}
Choose a homogeneous \(\mathbb C[L_0]\)-basis
\(e_1,\ldots,e_d\) of \(M\).  The diagonal action gives
\begin{equation*}
 X_k\boxtimes f(L_0)v
  =f(L_0-k)(X_k\boxtimes v),
 \qquad f\in\mathbb C[L_0].
\end{equation*}
\begin{samepage}%
Together with \eqref{eq:uniform-cover-bounded-index}, this shows that
\[
 \left\{
  L_q\boxtimes e_j,\ G_{q+\epsilon}\boxtimes e_j:
  0\leq q<N,\ 1\leq j\leq d
 \right\}
\]
generates \(\widehat M_\epsilon\) over \(\mathbb C[L_0]\).
\end{samepage}
\end{proof}

Every admissible \(\mathcal A\mathfrak g^\epsilon\)-module carries
a \(\D\)-action through \(t^{\pm1}\) and \(D\mapsto L_0\).
By \Cref{lem:weyl-finite-length}, if it is finitely generated over
\(\mathbb C[L_0]\), it is free of finite rank and has finite length
as an \(\mathcal A\mathfrak g^\epsilon\)-module.
We can therefore pass from the cover to a simple admissible subquotient.

\begin{proposition}
\label{prop:uniform-minimal-cover}
Let \(M\) be a nontrivial simple finite Cartan-free
\(\mathfrak g^\epsilon\)-module.  There exist a simple admissible
\(\mathcal A\mathfrak g^\epsilon\)-module \(E\) free of finite rank
over \(\mathbb C[L_0]\) and a surjective
\(\mathfrak g^\epsilon\)-homomorphism \(E\twoheadrightarrow M\).
\end{proposition}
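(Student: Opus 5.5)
Since \(M\) is nontrivial and simple, \(\mathfrak g^\epsilon M=M\), so \Cref{thm:uniform-cover-finite} applies: the \(A\)-cover \(\widehat M_\epsilon\) is finitely generated over \(\C[L_0]\). The map \(\pi_M:\widehat M_\epsilon\to M\) is surjective. Since \(\widehat M_\epsilon\) is admissible, it is a \(\D\)-module via \(t^{\pm1}\) and \(D\mapsto L_0\). By \Cref{lem:weyl-finite-length} it is therefore free of finite rank over \(\C[L_0]\), and it has finite length as an \(\A\mathfrak g^\epsilon\)-module, as noted just before the statement. The plan is to choose a composition series
\[
 0=E_0\subset E_1\subset\cdots\subset E_\ell=\widehat M_\epsilon
\]
of \(\A\mathfrak g^\epsilon\)-submodules. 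Each \(E_i\) is \(\Ztwo\)-graded, since the \(\A\mathfrak g^\epsilon\)-structure on the cover is graded.

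Next, let \(i\) be the smallest index with \(\pi_M(E_i)\ne0\); it exists because \(\pi_M(E_\ell)=M\ne0\). Then \(\pi_M(E_i)\) is a nonzero \(\mathfrak g^\epsilon\)-submodule of the simple module \(M\), so \(\pi_M(E_i)=M\). Also \(\pi_M(E_{i-1})=0\), so \(\pi_M\) factors through a surjective \(\mathfrak g^\epsilon\)-homomorphism
\[
 E:=E_i/E_{i-1}\twoheadrightarrow M .
\]
The subquotient \(E\) is a simple \(\A\mathfrak g^\epsilon\)-module. It is admissible because associativity and unitality of the \(\A\)-action pass to subquotients. It is nonzero and finitely generated over \(\C[L_0]\), being a subquotient of a finite free module over the PID \(\C[L_0]\). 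Applying \Cref{lem:weyl-finite-length} again, to the \(\D\)-module structure on \(E\), shows that \(E\) is free of finite rank over \(\C[L_0]\).

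The only points needing care are the following.
\begin{enumerate}
\item The composition series must be taken in the category of \(\Ztwo\)-graded modules with even maps. This is harmless: the length bound comes from rank additivity, which works equally well for graded submodules.
\item The \(\D\)-structure used in \Cref{lem:weyl-finite-length} must really be a \(\D\)-module structure, that is, \(L_0t-tL_0=t\). This follows from the Leibniz rule \([L_0,t\,Y]=(L_0\circ t)Y+t[L_0,Y]\) together with \(L_0\circ t=t\) in the realization on \(\A\).
\end{enumerate}
Of these, (ii) is the step to check explicitly, while (i) is routine. With both in place, the minimal-index argument yields the required simple admissible \(E\) surjecting onto \(M\).
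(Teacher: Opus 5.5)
Your proposal is correct and is essentially the paper's proof: take a finite composition series of the admissible cover \(\widehat M_\epsilon\) (available by \Cref{thm:uniform-cover-finite} and \Cref{lem:weyl-finite-length}), pick the least index \(i\) with \(\pi_M(E_i)\neq0\), and apply \Cref{lem:weyl-finite-length} again to get freeness of \(E=E_i/E_{i-1}\). Your extra checks are sound. For (ii), the relation \(L_0t-tL_0=t\) on an admissible module is most directly the bracket \([L_0,t]=L_0\circ t=t\) in \(\mathfrak g^\epsilon\ltimes\A\), rather than the Leibniz rule for the \(\A\)-action on \(\mathfrak g^\epsilon\).
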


\begin{proof}
By \Cref{thm:uniform-cover-finite,lem:weyl-finite-length}, the
cover \(\widehat M_\epsilon\) has a finite composition series of
admissible modules
\[
 0=E_0\subset E_1\subset\cdots\subset E_\ell=\widehat M_\epsilon.
\]
Let \(i\) be the least index such that \(\pi_M(E_i)\neq0\).
Then \(E=E_i/E_{i-1}\) is simple and admissible, and \(\pi_M\) induces
the required epimorphism.  The module \(E\) is finite free over
\(\mathbb C[L_0]\) by \Cref{lem:weyl-finite-length}.
\end{proof}

\section{Proof of the main theorem}
\label{sec:sector-classification}

Let \(\K\) be the Laurent Weyl superalgebra generated by
\(t^{\pm1},\xi,\partial_t,\partial_\xi\), acting naturally on \(\A\).
Identifying \(D\) with \(t\partial_t\), we have
\[
 \K\simeq\D\otimes\operatorname{End}_{\C}(\Lambda(\xi)).
\]
For a \(\D\)-module \(P\), put \(S(P)=P\otimes\Lambda(\xi)\),
with \(P\) placed in even degree and the natural \(\K\)-action.
By graded Morita equivalence, every simple graded
\(\K\)-module is isomorphic to \(S(P)\) or \(\Pi S(P)\) for some
simple \(\D\)-module \(P\).  These modules are strictly simple,
that is, simple after forgetting the grading.
In this notation, the actions defining \(H_{\epsilon,b}(P)\) in
\cref{def:unified-H} become
\begin{align}
 L_m
  &=t^m\left(D+(m-2\epsilon)b
       +\frac{m+2\epsilon}{2}\xi\partial_\xi\right),
       &&m\in\Z,
       \label{eq:unified-H-operator-L}\\
 G_p
  &=t^{p-\epsilon}\left(
       \xi D+2(p-\epsilon)b\xi
       -t^{2\epsilon}\partial_\xi\right),
       &&p\in\Z+\epsilon.
       \label{eq:unified-H-operator-G}
\end{align}

\begin{lemma}
\label{lem:sector-finite-tensor-factorization}
Let \(\mathcal R\) be a unital associative superalgebra, and \(E\) a simple
graded \(\K\otimes\mathcal R\)-module finitely generated over
\(\C[t\partial_t]\).
Then
\[
 E\simeq\Pi^\delta S(P)\otimes V,\qquad\delta\in\Ztwo,
\]
where \(P\) is a simple \(\D\)-module finite free
over \(\C[t\partial_t]\) and \(V\) is a finite-dimensional simple graded
\(\mathcal R\)-module.
\end{lemma}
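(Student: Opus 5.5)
The plan is to follow the tensor-product lemma of Xue--L\"u \cite{XueLu2021}, adapted to the graded setting and to finiteness over \(\C[t\partial_t]\) instead of weight spaces.

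\emph{Step 1: reduce to a simple \(\K\)-submodule.} Restrict \(E\) to \(\K=\K\otimes1\). Since \(t\partial_t=D\in\D\subset\K\), the restriction is finitely generated over \(\C[D]\). View it as a \(\D\)-module through \(\D\otimes1\subset\K\). By \Cref{lem:weyl-finite-length} it is then free of finite rank over \(\C[D]\), and every \(\K\)-submodule is free of positive rank. Hence \(E\) has finite length as a graded \(\K\)-module, so it contains a simple graded \(\K\)-submodule \(W\). By graded Morita equivalence, \(W\simeq\Pi^\delta S(P)\) for a simple \(\D\)-module \(P\). As a \(\D\)-module, \(P\) is a summand of \(W\), so it is finitely generated over \(\C[D]\) and hence finite free by \Cref{lem:weyl-finite-length}.

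\emph{Step 2: define the multiplicity space and map into \(E\).} Put
\[
V=\operatorname{Hom}_{\K}(\Pi^\delta S(P),E),
\]
the super Hom space with both even and odd maps. Make \(\mathcal R\) act on \(V\) by \((r\phi)(w)=r\phi(w)\), using the sign rule for odd elements. This is well defined because \(\K\otimes1\) and \(1\otimes\mathcal R\) supercommute. The evaluation map
\[
\Pi^\delta S(P)\otimes V\to E,\qquad w\otimes\phi\mapsto(-1)^{|w||\phi|}\phi(w),
\]
is a nonzero even \(\K\otimes\mathcal R\)-homomorphism, so by simplicity of \(E\) it is surjective.

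\emph{Step 3: injectivity and the structure of \(V\).} To see that \(V\) is finite-dimensional, use strict simplicity: \(\operatorname{End}_{\K}(S(P))=\C\). This holds because \(S(P)\) is strictly simple and countable-dimensional, so Dixmier's lemma applies. The images of linearly independent \(\phi_1,\dots,\phi_n\in V\) then form a direct sum of \(n\) copies of \(S(P)\) inside \(E\). This is the standard Jacobson-density argument: a minimal dependence would produce a nonscalar endomorphism. Counting \(\C[D]\)-ranks gives \(n\operatorname{rank}P\le\operatorname{rank}E\), so \(\dim V<\infty\). The same density argument shows the evaluation map is injective: a nonzero kernel element \(\sum w_i\otimes\phi_i\) with the \(\phi_i\) independent would contradict the directness of the sum \(\bigoplus\phi_i(S(P))\). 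Hence \(E\simeq\Pi^\delta S(P)\otimes V\). Any graded \(\mathcal R\)-submodule \(V'\subset V\) gives a submodule \(\Pi^\delta S(P)\otimes V'\) of \(E\), so \(V\) is simple.

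The main obstacle is the sign bookkeeping in the super setting. Odd homomorphisms must be handled so that \(\operatorname{End}_{\K}(S(P))\) is \(\C\) and not a Clifford-type algebra \(\C[\theta]/(\theta^2-1)\). This is exactly where strict simplicity of \(S(P)\) is needed. An odd automorphism would make \(S(P)\simeq\Pi S(P)\) as graded modules, and that cannot happen since \(S(P)\) has even part \(P\oplus\xi P\) with both summands nonzero. So I will verify \(\operatorname{End}_{\K}(S(P))=\C\) directly from the tensor decomposition \(\K\simeq\D\otimes\operatorname{End}_{\C}(\Lambda(\xi))\), using \(\operatorname{End}_{\D}(P)=\C\) by Dixmier's lemma.
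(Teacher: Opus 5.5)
Your proposal is correct and follows the paper's proof. The paper likewise extracts a simple graded submodule \(\Pi^\delta S(P)\) via \Cref{lem:weyl-finite-length}, then cites \cite[Lemma~2.2(4)]{XueLu2021} (using strict simplicity and countable dimension) for \(E\simeq\Pi^\delta S(P)\otimes V\), which your Steps~2--3 reprove by density and a \(\C[t\partial_t]\)-rank count. One small slip: the even part of \(S(P)\) is \(P\otimes1\), not \(P\oplus\xi P\), so the correct reason \(S(P)\not\simeq\Pi S(P)\) is that the even idempotent \(\partial_\xi\xi\) projects onto the even part of \(S(P)\) but onto the odd part of \(\Pi S(P)\); alternatively, strict simplicity plus Dixmier's lemma suffices, as you also say.
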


\begin{proof}
By \Cref{lem:weyl-finite-length}, \(E\) is free over
\(\C[t\partial_t]\) and has finite length as a
\(\D\)-module, hence also as a graded \(\K\)-module.
Choose a simple graded \(\K\)-submodule \(S\subseteq E\).
Then \(S\simeq\Pi^\delta S(P)\)
for some simple \(\D\)-module \(P\) and \(\delta\in\Ztwo\).
Since \(P\) is a direct summand of \(S\) over \(\C[t\partial_t]\),
it is free of finite rank.  Moreover, \(S\) is strictly simple and
\(\K\) has a countable basis, so \(E\simeq S\otimes V\)
for some simple graded \(\mathcal R\)-module \(V\) by
\cite[Lemma~2.2(4)]{XueLu2021}.
Since \(E\) is finitely generated over
\(\C[t\partial_t]\), \(V\) is finite dimensional.
\end{proof}

\subsection{Simple admissible modules}
\label{subsec:simple-admissible-modules}

\begin{proposition}
\label{prop:Ramond-simple-admissible}
Let \(E\) be a simple admissible \(\A\mathfrak g^0\)-module finitely
generated over \(\C[L_0]\).  Then
\[
 E\simeq\Pi^\delta H_{0,b}(P),\qquad
 b\in\C,\quad\delta\in\Ztwo,
\]
where \(P\) is a simple \(\D\)-module finite free over \(\C[D]\).
\end{proposition}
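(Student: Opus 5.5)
We follow the algebra-factorization approach of \cite{CaiLiuLu2021}. The aim is to realize the associative algebra acting on an admissible \(\A\mathfrak g^0\)-module as a tensor product \(\K\otimes\mathcal R\), and then apply \Cref{lem:sector-finite-tensor-factorization}.

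The first step is to define modified elements
\[
 T(m)=t^{-m}L_m-L_0-\tfrac{m}{2}t^{-1}\xi\,\ldots
\]
More precisely, we set \(\mathcal L_m:=t^{-m}\circ L_m-\bigl(D+\tfrac m2\xi\partial_\xi\bigr)\) and \(\mathcal G_p:=t^{-p}\circ G_p-\bigl(\xi D-\partial_\xi\bigr)\). Here \(D,\xi,\partial_\xi\) are realized inside \(U(\A\ltimes\mathfrak g^0)\) acting on \(E\): \(D=L_0\), \(\xi\) and \(t^{\pm1}\) come from \(\A\), and \(\partial_\xi\) is obtained as \(-G_0\) modulo a correction involving \(\xi\). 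We then check, using the Leibniz rule \([X,aY]=(X\circ a)Y\pm a[X,Y]\) and the relations \eqref{eq:uniform-A-on-g}, that these operators supercommute with \(t^{\pm1}\), \(\xi\), \(D\) and \(\partial_\xi\). They span a Lie superalgebra \(\mathfrak T\) isomorphic to the Lie superalgebra of \(\mathfrak g^0_+\)-type "jets": its even part is spanned by \(\mathcal L_m\) and its odd part by \(\mathcal G_p\). The bracket is inherited as in \cite[Section~3]{CaiLiuLu2021}. This yields a surjection
\[
 \K\otimes U(\mathfrak T)\to\operatorname{End}(E)\text{-image}.
\]
The image of \(U(\mathfrak g^0\ltimes\A)\) equals the image of \(\K\otimes U(\mathfrak T)\), because \(L_m\) and \(G_p\) can be recovered from these generators.

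We then apply \Cref{lem:sector-finite-tensor-factorization} with \(\mathcal R=U(\mathfrak T)\), giving \(E\simeq\Pi^\delta S(P)\otimes V\), where \(V\) is a finite-dimensional simple graded \(\mathfrak T\)-module. Next, \Cref{lem:uniform-differentiator-lift} (transported to \(E\), or the cover relations \eqref{eq:uniform-cover-difference}) shows that the finite differences of \(\mathcal L_m\) and \(\mathcal G_p\) vanish on \(V\) at order \(N\). Hence \(\mathfrak T\) acts through a finite-dimensional quotient, namely a truncated current superalgebra. Its nilpotent ideal acts trivially on the simple module \(V\) by the super Lie theorem argument of \cite[Lemma~3.8]{CaiLiuLu2021}. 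The reductive part that remains is one-dimensional: \(\mathcal L_m\) acts by \(m\,b\) for a scalar \(b\), and the \(\mathcal G_p\) act by zero, since they are odd and square into the nilpotent radical. So \(V\) is one-dimensional, and after absorbing parity into \(\delta\) it is even.

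Finally, we substitute into the formulas to get
\[
 L_m=t^m\bigl(D+mb+\tfrac m2\xi\partial_\xi\bigr),\qquad G_p=t^p\bigl(\xi D+2pb\xi-\partial_\xi\bigr).
\]
These match \eqref{eq:unified-H-operator-L} and \eqref{eq:unified-H-operator-G} with \(\epsilon=0\), so \(E\simeq\Pi^\delta H_{0,b}(P)\). The main obstacle is the first step: choosing the correct correction terms so that \(\mathcal G_p\) genuinely supercommutes with \(\K\), and in particular realizing \(\partial_\xi\) correctly. We would first try the Ramond formulas of \cite{CaiLiuLu2021}, verifying the brackets directly.
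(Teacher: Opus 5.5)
Your outline matches the paper's: factor through \(\K\otimes U(\mathfrak T)\) as in \cite[Propositions~3.1--3.2]{CaiLiuLu2021}, apply \Cref{lem:sector-finite-tensor-factorization}, show \(\dim V=1\), then substitute. However, the explicit generators you write down are not supercentral, and the error carries through to your final formula.

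With \(D=L_0\) and \(\partial_\xi=\xi L_0-G_0\) (forced by \(G_0=\xi D-\partial_\xi\)), your odd element is simply \(t^{-p}G_p-G_0\). Already on \(H_{0,b}(P)\) it acts as \(2pb\xi\), whose anticommutator with \(\partial_\xi\) is \(2pb\neq0\). In general, put \(\ell_m=t^{-m}L_m-L_0\) and \(\gamma_m=t^{-m}G_m-G_0\). The admissibility relations then give the supercommutators
\[
 [\gamma_p,\partial_\xi]=2\ell_p+p\,\xi\gamma_p-p\,\xi\partial_\xi,\qquad
 \bigl[\ell_m-\tfrac m2\,\xi\partial_\xi,\ \partial_\xi\bigr]=\tfrac m2\bigl(\gamma_m-2\xi\ell_m\bigr).
\]
So your map from \(\K\otimes U(\mathfrak T)\) is not an algebra homomorphism, and \Cref{lem:sector-finite-tensor-factorization} does not apply to it.

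Your last step is also inconsistent with your own definition. Setting \(\mathcal G_p=0\) would give \(G_p=t^p(\xi D-\partial_\xi)\), without the term \(2pb\xi\) that you then write. The correct generators are
\[
 \mathcal G_p=\gamma_p-2\xi\ell_p,\qquad
 \mathcal L_m=\ell_m-\tfrac m2\,\xi\partial_\xi+\tfrac m2\,\xi\gamma_m,
\]
and these do supercommute with \(t^{\pm1},\xi,L_0,\partial_\xi\). Then \(\mathcal G_p=0\) and \(\mathcal L_m=mb\) give \(\ell_m=mb+\tfrac m2\xi\partial_\xi\) and \(\gamma_p=2\xi\ell_p=2pb\xi\). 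This is exactly \eqref{eq:unified-H-operator-L}--\eqref{eq:unified-H-operator-G} with \(\epsilon=0\). The \(2pb\xi\) term comes precisely from the correction \(-2\xi\ell_p\) you omitted. The omitted term in \(\mathcal L_m\) only becomes harmless once one knows \(\mathcal G_m=0\).

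A lesser point concerns your reduction of \(\mathfrak T\) to a truncated current superalgebra. It needs the linear finite differences of \(\mathcal L_m,\mathcal G_p\) to vanish, and you assert this rather than prove it. Applied to \(E\), which is finite Cartan-free, \Cref{lem:uniform-differentiator-lift} only yields the quadratic annihilators \(\Omega^{(N)}_{X,Y}(k,p)\). The linear statement does not follow from these without further work.

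The cover relations \eqref{eq:uniform-cover-difference} do give it. For \(x\in\{L_k,G_k\}\),
\[
t^{-m}L_m(x\boxtimes v)=(t^{-m}[L_m,x])\boxtimes v+(t^{-m}x)\boxtimes L_mv,
\]
and the first term is linear in \(m\). But this only works when \(E\) is a subquotient of an \(A\)-cover, not for an arbitrary simple admissible \(E\) as in the statement.

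The paper avoids polynomiality entirely. It cites \cite[Lemma~3.4(2)]{CaiLiuLu2021}: a finite-dimensional simple \((t-1)\mathfrak g^0\)-module is one dimensional and killed by \((t-1)^2\mathfrak g^0\). It then reads off \(b\) as the scalar by which \(L_1-L_0\) acts.
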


\begin{proof}
By \cite[Propositions~3.1--3.2]{CaiLiuLu2021}, \(E\) is a simple
\(\K\otimes U((t-1)\mathfrak g^0)\)-module with \(D=L_0\).
Thus \Cref{lem:sector-finite-tensor-factorization} gives
\[
 E\simeq\Pi^\delta S(P)\otimes V
\]
with \(P\) as stated and \(V\) a finite-dimensional simple
\((t-1)\mathfrak g^0\)-module.
By \cite[Lemma~3.4(2)]{CaiLiuLu2021}, \(V\) is one dimensional
and annihilated by \((t-1)^2\mathfrak g^0\).
Let \(b\in\C\) be the scalar by which \(L_1-L_0\) acts on \(V\).
Absorbing the parity of \(V\) into \(\delta\) and using the
expressions for \(L_m\) and \(G_m\) in the proof of
\cite[Proposition~3.2]{CaiLiuLu2021}, we obtain
\eqref{eq:unified-H-operator-L}--\eqref{eq:unified-H-operator-G}
with \(\epsilon=0\).
\end{proof}

\begin{samepage}
\begin{proposition}
\label{prop:NS-simple-admissible}
Let \(E\) be a simple admissible \(\A\mathfrak g^{1/2}\)-module
finitely generated over \(\C[L_0]\).  Then
\[
 E\simeq\Pi^\delta H_{1/2,b}(P),\qquad
 b\in\C,\quad\delta\in\Ztwo,
\]
where \(P\) is a simple \(\D\)-module finite free over \(\C[D]\).
\end{proposition}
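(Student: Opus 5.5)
The argument follows the Ramond case (\Cref{prop:Ramond-simple-admissible}) step by step, with \cite{CaiLiuLu2021} replaced by its Neveu--Schwarz analogue \cite{CaiLu2022}.

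First, by the algebra factorization of \cite{CaiLu2022}, a simple admissible \(\A\mathfrak g^{1/2}\)-module \(E\) is a simple module over
\[
 \K\otimes U(\mathfrak a),
\]
where \(D\) acts as \(L_0\) and \(\mathfrak a\) is the appropriate finite-codimension subalgebra. In the Neveu--Schwarz setting this is the analogue of \((t-1)\mathfrak g^0\). It is generated by the elements \(t^{-m}L_m-L_0\) and suitable odd combinations \(t^{-p-1/2}G_p-(\text{correction})\), with the correction chosen to cancel the \(\K\)-part. The factorization comes from an explicit algebra isomorphism
\[
 U(\A\mathfrak g^{1/2})/(\text{admissibility ideal})\simeq\K\otimes U(\mathfrak a),
\]
and \(E\) is graded because all homomorphisms are even.

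Second, \Cref{lem:sector-finite-tensor-factorization} applies with \(\mathcal R=U(\mathfrak a)\), since \(E\) is finitely generated over \(\C[L_0]=\C[t\partial_t]\). It gives
\[
 E\simeq\Pi^\delta S(P)\otimes V,
\]
with \(P\) a simple \(\D\)-module finite free over \(\C[D]\) and \(V\) finite dimensional and simple.

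Third, we show that \(V\) is one dimensional, which is the analogue of \cite[Lemma~4.3]{CaiLu2022}. The subalgebra \(\mathfrak a\) has a filtration by the ideals \((t-1)^k\mathfrak a\), and its associated graded algebra is built from a Witt-type algebra \(\mathfrak a_{\bar0}\) of "vector fields vanishing at \(t=1\)". A finite-dimensional simple module over such an algebra factors through a finite-dimensional solvable quotient. Its even part acts by scalars, and since the odd part squares into the even part it acts nilpotently, hence by zero. We expect to quote this directly from \cite[Lemma~4.3]{CaiLu2022} or \cite[Lemma~2.2]{CaiLu2022}. After this step, \(V\) is one dimensional and the only surviving parameter is the scalar \(b\) by which \(L_1-L_0\) acts on \(V\); the parity of \(V\) is absorbed into \(\delta\).

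Finally, we reconstruct the \(L_m\) and \(G_p\) actions from the factorization formulas in \cite{CaiLu2022} and check that they agree with \eqref{eq:unified-H-operator-L}--\eqref{eq:unified-H-operator-G} for \(\epsilon=1/2\). The expected obstacle is bookkeeping. The NS shift \(t^{2\epsilon}=t\) in \(G_p\), together with the term \(\tfrac{m+1}{2}\xi\partial_\xi\), must match the normalization of the contact action chosen here, which differs from \cite{CaiLu2022} by the \(\A\)-module structure \eqref{eq:uniform-A-on-g}. We handle this by requiring the relations \([G_p,G_q]=-2L_{p+q}\) and \([L_m,G_p]=(p-m/2)G_{m+p}\) to hold. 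These relations force the odd scalar data to be expressed through \(b\), so that \(G_p\) takes the stated form, up to a possible reparametrization of \(b\) that is fixed by comparing \(L_1-L_0\).
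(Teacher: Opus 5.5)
There is a genuine gap in your first step, and everything after it depends on it. The factorization that \cite{CaiLu2022} provides is Lemmas~3.3--3.4 there, as the paper uses them. It is not a \((t-1)\)-type factorization of the full Neveu--Schwarz algebra. It concerns admissible modules over \(\A\) and the \emph{positive contact subalgebra} spanned by \(L_m\), \(m\ge-1\), and \(G_p\), \(p\ge-1/2\). In it, \(\partial_t=L_{-1}\), \(\partial_\xi=\xi L_{-1}-G_{-1/2}\), and the complementary factor \(\mathcal T\) is isomorphic to the nonnegative part of the algebra.

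Your identification of \(D\) with \(L_0\), so that \(\C[L_0]=\C[t\partial_t]\), is also impossible in this sector for any factorization of this kind. Indeed \([L_0,\xi]=L_0\circ\xi=\tfrac12\xi\ne0\), whereas \(t\partial_t\) commutes with \(\xi\) in \(\K\). In the actual factorization \(L_0=t\partial_t+\tfrac12\xi\partial_\xi+L_0'\). So your check of the hypothesis of \Cref{lem:sector-finite-tensor-factorization} (finite generation over \(\C[t\partial_t]\)) fails. In addition, the one-dimensionality of \(V\) is asserted for an algebra \(\mathfrak a\) that is never constructed. The result the paper can actually quote, Lemma~3.7 of \cite{CaiLu2022}, concerns finite-dimensional weight modules over \(\mathcal T\).

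The missing idea is a reduction to the contact subalgebra. The paper shows that \(\mathscr C=\operatorname{End}_{\D}(E)\), with \(D\) acting as \(L_0\), acts faithfully on \(E/L_0E\). The reason is that an endomorphism commuting with \(t\) and mapping \(E\) into \(L_0E\) maps \(E\) into every \((L_0-n)E\). Hence \(\mathscr C\) is finite dimensional.

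Since \(t^{-n}L_n-L_0\in\mathscr C\), applying \cite[Theorem~15]{BilligFutornyIoharaKashuba2020} on \(E/L_0E\) gives polynomiality of \(t^{-n}L_n\) in \(n\). So each \(t^{-n}L_n\) is a linear combination of the \(t^{-j}L_j\) with \(j\ge0\), and \([L_m,G_{1/2}]=\tfrac{1-m}{2}G_{m+1/2}\) supplies the \(G_p\) with \(p\le-3/2\). Consequently:
\begin{enumerate}
\item \(E\) stays simple over \(\A\) and the positive contact subalgebra, so the Cai--Lu factorization applies.
\item \(L_0'\in\mathscr C\) is algebraic, which gives finite generation over \(\C[t\partial_t]\).
\item \(V\) is a weight \(\mathcal T\)-module and hence one dimensional.
\item The shift \(D=t\partial_t+2b\) produces \(P\).
\item Polynomiality, together with commutators with \(G_{1/2}\), extends the formulas from \(m\ge-1\), \(p\ge-1/2\) to all indices.
\end{enumerate}
If you want to keep your route instead, you must construct and prove a factorization of the full algebra yourself, necessarily with \(t\partial_t\ne L_0\). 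You would also need to classify the finite-dimensional simple modules of its centralizing factor. The proposal does neither.
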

\end{samepage}

\begin{proof}
By \Cref{lem:weyl-finite-length}, \(E\) is finite free over
\(\C[L_0]\).  Set \(\mathscr C=\operatorname{End}_{\D}(E)\),
where \(D\) acts as \(L_0\).  If \(T\in\mathscr C\) satisfies
\(TE\subseteq L_0E\), then \(Tt=tT\) gives the inclusions
\[
 TE=T(t^nE)=t^nTE\subseteq t^nL_0E=(L_0-n)E,
 \qquad n\in\Z,
\]
which imply \(T=0\).
Thus the action of \(\mathscr C\) on \(E/L_0E\) is faithful,
and \(\mathscr C\) is finite dimensional.

The operators \(t^{-n}L_n-L_0\) belong to \(\mathscr C\), and
\[
 [t^{-m}L_m,t^{-n}L_n]
 =(n-m)t^{-m-n}L_{m+n}+mt^{-m}L_m-nt^{-n}L_n.
\]
The same relation holds after replacing each \(t^{-n}L_n\) by
\(t^{-n}L_n-L_0\).  Applying
\cite[Theorem~15]{BilligFutornyIoharaKashuba2020}, in the purely even
one-variable case, to their induced action on \(E/L_0E\), and
using faithfulness, we obtain polynomiality of \(t^{-n}L_n\) in \(n\).
By interpolation, each \(t^{-n}L_n\), \(n\in\Z\), is a linear
combination of operators \(t^{-j}L_j\) with \(j\geq0\).
Thus every submodule for \(\A\) and the
positive contact subalgebra, generated by
\(L_m\), \(m\geq-1\), and \(G_p\), \(p\geq-1/2\), is
invariant under all \(L_n\).  Since
\[
 [L_m,G_{1/2}]=\frac{1-m}{2}G_{m+1/2},\qquad m\leq-2,
\]
it is also invariant under all \(G_p\).
Hence \(E\) remains simple upon restriction.
By \cite[Lemmas~3.3--3.4]{CaiLu2022}, this restriction is a simple
\(\K\otimes U(\mathcal T)\)-module, where \(\mathcal T\) is
isomorphic to the nonnegative part of the Neveu--Schwarz algebra.
Under the isomorphism of \cite[Lemma~3.3]{CaiLu2022},
\[
 \partial_t=L_{-1},\qquad
 \partial_\xi=\xi L_{-1}-G_{-1/2},\qquad
 L_0=t\partial_t+\frac12\xi\partial_\xi+L'_0.
\]
The operator \(L'_0\) belongs to \(\mathscr C\) and is algebraic.
Since \(t\partial_t\), \((1/2)\xi\partial_\xi\), and \(L'_0\)
commute and the last two are algebraic, \(E\) is
finitely generated over \(\C[t\partial_t]\).

\Cref{lem:sector-finite-tensor-factorization} now gives
\[
 E\simeq\Pi^\delta S(P')\otimes V,
\]
where \(P'\) is a simple \(\D\)-module finite free over
\(\C[t\partial_t]\), and \(V\) is a finite-dimensional simple
graded \(\mathcal T\)-module.  Since \(V\) is generated by a
homogeneous \(L'_0\)-eigenvector and \(\operatorname{ad}L'_0\)
is diagonalizable on \(\mathcal T\), \(V\) is a weight module.  Thus
\cite[Lemma~3.7]{CaiLu2022} implies that \(V\) is one dimensional
and that all positive-degree elements of \(\mathcal T\) act
trivially.  Let \(b\) be the scalar by which \(L'_0\) acts, and
absorb the parity of \(V\) into \(\delta\).
Retaining the \(t\)-action on \(P'\) and setting
\(D=t\partial_t+2b\) defines a simple \(\D\)-module \(P\), still
finite free over \(\C[D]\).  The expressions for \(L_m\) and
\(G_p\) in \cite[(3.1)--(3.2)]{CaiLu2022} yield
\eqref{eq:unified-H-operator-L}--\eqref{eq:unified-H-operator-G}
with \(\epsilon=1/2\), initially for \(m\geq-1\) and
\(p\geq-1/2\).  Polynomiality of \(t^{-m}L_m\) extends
\eqref{eq:unified-H-operator-L} to all \(m\in\Z\).
The commutator with \(G_{1/2}\) then extends
\eqref{eq:unified-H-operator-G} to all \(p\in\Z+1/2\),
proving the assertion.
\end{proof}

\subsection{The classification}
\label{subsec:unified-classification-proof}

\begin{proof}[Proof of \Cref{thm:unified-main}]
Let \(M\) be a nontrivial simple finite Cartan-free module over
\(\mathfrak g^\epsilon\) or \(\widehat{\mathfrak g}^{\,\epsilon}\).
In the latter case, \Cref{prop:uniform-central-charge} gives
\(C=0\), so \(M\) is a \(\mathfrak g^\epsilon\)-module.
By \Cref{prop:uniform-minimal-cover}, there is a simple admissible
module \(E\) finite free over \(\C[L_0]\) with an epimorphism
\(E\twoheadrightarrow M\).  Applying
\Cref{prop:Ramond-simple-admissible,prop:NS-simple-admissible} in the
respective cases gives
\begin{equation}
 \Pi^\delta H_{\epsilon,b}(P)\twoheadrightarrow M,
 \label{eq:H-onto-M-unified}
\end{equation}
where \(P\) is simple and finite free over \(\C[D]\).
By \Cref{prop:unified-H-simple}\textup{(ii)},
\(H_{\epsilon,1/2}(P)\) has no nontrivial simple quotient, so
\(b\ne1/2\).  \Cref{prop:unified-H-simple}\textup{(i)} now
makes the source of
\eqref{eq:H-onto-M-unified} simple, so the epimorphism is an
isomorphism.  Conversely, every listed module is finite Cartan-free
by construction and simple by \Cref{prop:unified-H-simple}\textup{(i)}.
Parameter uniqueness follows from \Cref{cor:unified-H-uniqueness}.
\end{proof}

By \eqref{eq:exceptional-parity-zero}, the simple submodule of
\(H_{\epsilon,1/2}(P)\) is isomorphic to \(\Pi H_{\epsilon,0}(P)\)
and is therefore already included in the classification.

\begin{proof}[Proof of \Cref{cor:finite-generated-simple}]
Let \(T\) be the \(\C[L_0]\)-torsion of \(M\).  For
\(X\in\{L,G\}\) and \(s\) in the corresponding index set, the
relation \([L_0,X_s]=sX_s\) gives
\[
 f(L_0-s)X_sv=X_sf(L_0)v.
\]
Thus \(T\) is a graded submodule, and simplicity gives \(T=0\)
or \(T=M\).  If \(T=0\), the structure theorem over
\(\C[L_0]\) makes both parity components finite free, and
\Cref{thm:unified-main} applies.  If \(T=M\), finite generation
makes \(M\) finite dimensional.
Its generalized \(L_0\)-spectrum is finite, so \(L_{\pm m}=0\)
for all sufficiently large positive integers \(m\).  Taking two
distinct such \(m\) in
\[
 0=[L_m,L_{-m}]=-2mL_0+\frac{m^3-m}{12}C
\]
gives \(L_0=C=0\).  Thus
\[
 X_s=\frac{1}{s}[L_0,X_s]=0,\qquad s\ne0.
\]
In the Ramond case,
\[
 0=[L_1,G_{-1}]=-\frac{3}{2}G_0,
\]
so \(G_0=0\).  Hence the whole algebra acts trivially, and
simplicity leaves the one-dimensional trivial module or its
parity change.
\end{proof}

\subsection{Finite length and extensions}
\label{subsec:finite-length-extensions}

\begin{proposition}
\label{prop:finite-generated-finite-length}
Every \(\mathfrak g^\epsilon\)- or
\(\widehat{\mathfrak g}^{\,\epsilon}\)-module finitely generated
over \(\C[L_0]\) has finite length, with composition factors
among those listed in \Cref{cor:finite-generated-simple}.
\end{proposition}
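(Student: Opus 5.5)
Let \(M\) be finitely generated over \(\C[L_0]\) and let \(T\) be its \(\C[L_0]\)-torsion submodule. As in the proof of \Cref{cor:finite-generated-simple}, \(T\) is a graded submodule. Since \(M/T\) is torsion-free and finitely generated, it is finite free, so \(M/T\) is finite Cartan-free. We treat \(T\) and \(M/T\) separately; finite length is preserved under extensions, so this suffices.

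\emph{The torsion part.} \(T\) is finitely generated torsion over \(\C[L_0]\), hence finite dimensional. Thus \(T\) has finite length, and each composition factor is a finite-dimensional simple module. By the argument in the proof of \Cref{cor:finite-generated-simple}, each such factor is trivial and one dimensional, of even or odd parity.

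\emph{The free part.} We induct on \(r=\operatorname{rank}_{\C[L_0]}(M/T)\); the case \(r=0\) is trivial. For \(r>0\), let \(N=M/T\) and take a nonzero cyclic graded submodule \(N'\subseteq N\) generated by a homogeneous vector. Since \(N\) is only finitely generated over \(\C[L_0]\), we do not invoke Zorn's lemma on \(N\) itself. Instead, we look for a nonzero submodule \(S\subseteq N\) such that \(N/S\) has strictly smaller free rank after removing torsion.

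Concretely, we choose a nonzero submodule \(S\) of minimal positive \(\C[L_0]\)-rank among the nonzero submodules of \(N\). Every nonzero submodule of \(N\) is torsion-free, hence free of positive rank. We then choose \(S\) of minimal rank \(s\). Every nonzero submodule \(S'\subseteq S\) also has rank \(s\), so \(S/S'\) is torsion and therefore finite dimensional. The key step is to produce a simple submodule. Take a maximal proper submodule \(S''\subsetneq S\) containing a given \(S'\); it exists because \(S/S'\) is finite dimensional. Then \(S/S''\) is finite-dimensional simple, hence trivial of dimension one. Consequently, \(S\) has finite length provided the descending chains of nonzero submodules of \(S\) stabilize.

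\emph{The main obstacle.} This chain condition is where we expect the real difficulty. An infinite descending chain \(S\supsetneq S_1\supsetneq S_2\supsetneq\cdots\), all of rank \(s\), would have trivial one-dimensional successive quotients. Such a quotient \(S_i/S_{i+1}\) is trivial only if \(\mathfrak g^\epsilon S_i\subseteq S_{i+1}\). We therefore plan to use \(\mathfrak g^\epsilon\)-perfectness together with the \(A\)-cover.

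Apply \Cref{thm:uniform-cover-finite} to \(S^\infty=\bigcap_i\mathfrak g^{\epsilon,(i)}S\), or more simply compare \(\mathfrak g^\epsilon S\) with \(\pi(\widehat S_\epsilon)\). The cover \(\widehat{(\mathfrak g^\epsilon S)}\) is finitely generated over \(\C[L_0]\) and has finite length by \Cref{lem:weyl-finite-length}. The subquotients of \(\mathfrak g^\epsilon S\) with no trivial quotient are images of it, which bounds the number of nontrivial factors. Trivial factors are bounded by \(\dim S/\mathfrak g^\epsilon S\). This dimension is finite: by the Witt-type argument, \(\mathfrak g^\epsilon S\supseteq L_0S\) up to finitely many weights. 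Indeed, \(L_0 S\subseteq\mathfrak g^\epsilon S\) directly, and \(S/L_0S\) is finite dimensional.

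\emph{Assembly.} Once \(S\) has finite length, pass to \(N/S\). Its torsion part is again finite dimensional, and its free rank is \(r-s<r\), so induction applies. The composition factors are then trivial modules and nontrivial simple finite Cartan-free modules. The latter appear in \Cref{thm:unified-main}: a nontrivial simple subquotient that is torsion-free is finite Cartan-free. Hence all factors lie in the list of \Cref{cor:finite-generated-simple}.
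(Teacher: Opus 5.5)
\textbf{There is a genuine gap.} Your reduction matches the paper's proof. You split off the finite-dimensional torsion, take a nonzero submodule \(S\) of least rank \(s\), note that \(S/S'\) is finite dimensional for every nonzero submodule \(S'\subseteq S\), and induct on rank.

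The gap is the step you call the main obstacle: you never prove that descending chains of nonzero submodules of \(S\) stabilize. Your claim that trivial factors are bounded by \(\dim S/\mathfrak g^\epsilon S\) is exactly the missing statement. It presupposes \(\mathfrak g^\epsilon S\subseteq S'\) for every nonzero \(S'\subseteq S\), and you do not establish this. Trivial successive quotients only give \(\mathfrak g^\epsilon S_i\subseteq S_{i+1}\) one step at a time, and you never combine these.

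The \(A\)-cover detour cannot fill the gap. In a chain of nonzero submodules of \(S\), every successive quotient is finite dimensional, so nontrivial factors never occur there. Moreover, \Cref{lem:weyl-finite-length} bounds the length of \(\widehat{(\mathfrak g^\epsilon S)}\) as an \(\A\mathfrak g^\epsilon\)-module, not the \(\mathfrak g^\epsilon\)-length of \(\mathfrak g^\epsilon S\). For \(\widehat{\mathfrak g}^{\,\epsilon}\) you would also need \(C=0\) on a nonsimple \(S\) before invoking \Cref{thm:uniform-cover-finite}.

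\textbf{The missing observation.} The computation in the proof of \Cref{cor:finite-generated-simple} never uses simplicity. On any finite-dimensional module, \(L_{\pm m}=0\) for large \(m\), which forces \(L_0=C=0\), hence \(X_s=0\) for \(s\ne0\) and \(G_0=0\). So every finite-dimensional module is annihilated by the whole algebra.

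Your perfectness remark gives the same conclusion if you carry it through. If \(S/S'\) has a composition series of length \(k\) with trivial factors, then \(x_1\cdots x_k\) kills it for all \(x_i\in\mathfrak g^\epsilon\). Writing elements as sums of supercommutators gives \(\mathfrak g^\epsilon S\subseteq\mathfrak g^\epsilon(\mathfrak g^\epsilon S)\subseteq\cdots\subseteq S'\).

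Either way, \(\mathfrak g^\epsilon S\subseteq S'\) for every nonzero \(S'\subseteq S\). Since \(0\ne L_0S\subseteq\mathfrak g^\epsilon S\), the submodule \(\mathfrak g^\epsilon S\) is contained in every nonzero submodule of \(S\), hence is simple. The quotient \(S/\mathfrak g^\epsilon S\) is trivial of dimension at most \(s\). Thus \(S\) has finite length with no chain argument and no cover, and your induction on rank finishes the proof. This is precisely the paper's argument.
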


\begin{proof}
It suffices to consider \(\widehat{\mathfrak g}^{\,\epsilon}\).
By the proof of \Cref{cor:finite-generated-simple}, the
\(\C[L_0]\)-torsion of \(M\) is a finite-dimensional submodule
and every finite-dimensional module is trivial.
Induct on the \(\C[L_0]\)-rank of \(M\), the rank-zero case
being immediate.  After factoring out the torsion, choose a
nonzero graded submodule \(N\) of least rank.  For every
nonzero graded submodule \(W\subseteq N\), the quotient
\(N/W\) is finite dimensional and hence trivial, so
\(\widehat{\mathfrak g}^{\,\epsilon}N\subseteq W\).
Since \(L_0N\ne0\), \(\widehat{\mathfrak g}^{\,\epsilon}N\)
is simple.  The quotient
\(M/\widehat{\mathfrak g}^{\,\epsilon}N\) has smaller rank,
so induction and \Cref{cor:finite-generated-simple} complete
the proof.
\end{proof}

The functors constructed in
\cite[Theorem~3.6]{ChenDaiLiuPei2024} also retain all even
homomorphisms.

\begin{proposition}
\label{prop:H-fully-faithful}
For fixed \(\epsilon\in\{0,1/2\}\) and \(b\in\C\), the
functor \(H_{\epsilon,b}\) is exact and fully faithful.
More precisely, for any \(\D\)-modules \(P,Q\),
\[
 \operatorname{Hom}_{\D}(P,Q)
 \simeq\operatorname{Hom}_{\mathfrak g^\epsilon}
 \bigl(H_{\epsilon,b}(P),H_{\epsilon,b}(Q)\bigr),
\]
via \(f\mapsto f\oplus\overline f\).
A short exact sequence of \(\D\)-modules splits if and only if
its image under \(H_{\epsilon,b}\) splits.
\end{proposition}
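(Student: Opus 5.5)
Exactness is immediate: \(H_{\epsilon,b}(P)=P\oplus\overline P\) as vector spaces and acts on morphisms by \(f\mapsto f\oplus\overline f\), so kernels, images and cokernels are computed componentwise. The formulas in \cref{def:unified-H} involve only \(t^{\pm1}\) and \(D\), so \(f\oplus\overline f\) commutes with every \(L_m\) and \(G_p\) whenever \(f\) is \(\D\)-linear. The map \(f\mapsto f\oplus\overline f\) is visibly injective. Hence the real content is surjectivity onto even homomorphisms, together with the splitting statement.

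For surjectivity, let \(\phi\colon H_{\epsilon,b}(P)\to H_{\epsilon,b}(Q)\) be an even \(\mathfrak g^\epsilon\)-homomorphism. Write \(\phi(v)=f(v)\) and \(\phi(\overline v)=\overline{g(v)}\) for linear maps \(f,g\colon P\to Q\). First, comparing \(G_{-\epsilon}\overline v=-v\) with \(\phi\) gives \(f(v)=g(v)\) when \(\epsilon=0\). When \(\epsilon=1/2\) it gives \(f(v)=t^{-1}g(\ldots)\), i.e.\ \(f(t^{0}v)\) relates to \(g\) through \(G_{-1/2}\overline v=-v\), and still yields \(f=g\). Next, applying \(\phi\) to \(G_p\overline v=-t^{p+\epsilon}v\) for all \(p\) gives \(f(t^{n}v)=t^{n}g(v)\) for all \(n\in\Z\). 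With \(n=0\) this forces \(f=g\), and then \(f\) commutes with \(t^{\pm1}\). Finally, \(L_0v=(D-2\epsilon b)v\) shows that \(f\) commutes with \(D\). Therefore \(f\in\operatorname{Hom}_{\D}(P,Q)\) and \(\phi=f\oplus\overline f\).

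For splitting, one direction is trivial: applying the additive functor to a \(\D\)-splitting produces a splitting. Conversely, suppose \(0\to P'\to P\to P''\to0\) is exact and its image splits via a \(\mathfrak g^\epsilon\)-section \(s\colon H_{\epsilon,b}(P'')\to H_{\epsilon,b}(P)\). We may take \(s\) even: if it is not, replace it by its even component, which is again a section because the projection is even. By full faithfulness, \(s=\sigma\oplus\overline\sigma\) for a \(\D\)-linear \(\sigma\). Since the composite with the projection is the identity \(\mathrm{id}\oplus\overline{\mathrm{id}}\), faithfulness shows that \(\sigma\) is a \(\D\)-section.

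The main obstacle is the bookkeeping of the shifts \(t^{p\pm\epsilon}\) in the Neveu--Schwarz case. One must check that the two odd relations really pin down \(g=f\) without circularity. I would handle this by using only the single relation \(G_p\overline v=-t^{p+\epsilon}v\), which contains no \(D\) and no \(b\). Taking \(p=-\epsilon\) gives \(f=g\), and taking \(p\) arbitrary then gives \(t\)-linearity.
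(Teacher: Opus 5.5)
Your argument is correct and is essentially the paper's proof: the relation \(G_p\overline v=-t^{p+\epsilon}v\) with \(p=-\epsilon\) forces \(f=g\), other values of \(p\) give \(t\)-linearity, and the \(L_0\)-action on the even component gives \(D\)-linearity; exactness and the splitting criterion then follow from \(H_{\epsilon,b}(P)=P\oplus\overline P\) and full faithfulness. The intermediate remark that for \(\epsilon=1/2\) one gets ``\(f(v)=t^{-1}g(\ldots)\)'' is wrong as written and should be deleted, since your final computation with \(p=-\epsilon\) gives \(f=g\) in both sectors.
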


\begin{proof}
Write an even homomorphism as \(f\oplus\overline g\).
Compatibility with \(G_{-\epsilon}\) on the odd component
gives \(f=g\), and compatibility with \(G_{1-\epsilon}\)
gives \(ft=tf\).  Compatibility with \(L_0\) on the even
component then gives \(fD=Df\).  Conversely, every
\(\D\)-homomorphism induces an even homomorphism by
\eqref{eq:H-even-components}--\eqref{eq:H-odd-components}.
Exactness follows from \(H_{\epsilon,b}(P)=P\oplus\overline P\),
and the assertion about splitting follows from full faithfulness.
\end{proof}

Varying the parameter \(b\) gives explicit self-extensions.

\begin{proposition}
\label{prop:H-self-extension}
Let \(P\) be a simple \(\D\)-module free of finite rank \(r\)
over \(\C[D]\), and put \(M=H_{\epsilon,b}(P)\), where
\(b\ne1/2\).  Let \(\eta\) be even with \(\eta^2=0\).
Replacing \(b\) by \(b+\eta\) in
\eqref{eq:H-even-components}--\eqref{eq:H-odd-components}
defines an action on
\(E=(P\oplus\overline P)\otimes_{\C}\C[\eta]/(\eta^2)\)
and a nonsplit exact sequence
\begin{equation}
 0\longrightarrow M\longrightarrow E\longrightarrow M
 \longrightarrow0.
 \label{eq:parameter-self-extension}
\end{equation}
The module \(E\) is indecomposable of length two, and each
parity component is free of rank \(2r\) over \(\C[L_0]\).
\end{proposition}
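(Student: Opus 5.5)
The plan is to verify four things in turn: that \(E\) is a module, that \eqref{eq:parameter-self-extension} is exact, that it does not split, and the length and rank assertions.

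\textbf{Module structure.} We would regard \(E\) as \(H_{\epsilon,b+\eta}(P\otimes\C[\eta]/(\eta^2))\) over the base ring \(R=\C[\eta]/(\eta^2)\). The operators \eqref{eq:unified-H-operator-L}--\eqref{eq:unified-H-operator-G} satisfy the defining brackets of \(\mathfrak g^\epsilon\) identically as polynomials in the parameter \(b\); this is what \cite[Theorem~3.6]{ChenDaiLiuPei2024} establishes for every \(b\in\C\). A polynomial identity in \(b\) with coefficients in \(\operatorname{End}(P\oplus\overline P)\) remains valid after substituting \(b+\eta\), with \(\eta\) even and central. Hence \(E\) is a \(\mathfrak g^\epsilon\)-module.

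\textbf{Exactness.} The submodule \(\eta E\) is stable because \(\eta\) commutes with the action. Its action is the action at parameter \(b\), since \(\eta\cdot\eta=0\), so \(\eta E\simeq M\). For the same reason \(E/\eta E\simeq M\). This gives \eqref{eq:parameter-self-extension}.

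\textbf{Ranks.} From \(L_0=D-2\epsilon(b+\eta)\) on the even part, we compute \(L_0(v\otimes\eta^j)\). This equals \(\bigl(D-2\epsilon b\bigr)v\otimes\eta^j\) plus a term \(-2\epsilon v\otimes\eta^{j+1}\). The \(\C[L_0]\)-structure is therefore a unipotent perturbation of the free module \(P\oplus P\eta\). Filtering by powers of \(\eta\), the associated graded is free of rank \(2r\), and an extension of free modules is free. So each parity component has rank \(2r\). The odd part is handled the same way.

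\textbf{Nonsplitting.} This is the main step. Suppose a section \(s:M\to E\) exists. We would write \(s(v)=v+\eta\,\phi(v)\) on the even part and \(s(\overline v)=\overline v+\eta\,\overline{\psi(v)}\) on the odd part, with \(\phi,\psi\in\operatorname{End}_\C(P)\). Then we compare coefficients of \(\eta\), mimicking the proof of \Cref{prop:H-fully-faithful}.
\begin{itemize}
\item Compatibility with \(G_{-\epsilon}\overline v=-t^{0}v\), whose formula has no \(b\), gives \(\phi=\psi\).
\item Compatibility with \(G_{1-\epsilon}\overline v=-tv\) then gives \(\phi t=t\phi\).
\item Compatibility with \(L_m\) on the even part gives \([\phi,t^m(D+(m-2\epsilon)b)]+(m-2\epsilon)t^m=0\).
\end{itemize}
Using \(t\phi=\phi t\), this becomes \(t^m[\phi,D]=-(m-2\epsilon)t^m\) for all \(m\). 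Comparing \(m=0\) and \(m=1\) gives \([\phi,D]=2\epsilon\) and \([\phi,D]=2\epsilon-1\), a contradiction. Hence \eqref{eq:parameter-self-extension} does not split.

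\textbf{Length and indecomposability.} \(M\) is simple by \Cref{prop:unified-H-simple}(i), since \(b\ne1/2\). So \(E\) has length two. Any decomposition of \(E\) into two nonzero summands would consist of two simple summands. The submodule \(\eta E\simeq M\) would then either be one of these summands, or its projection would give a splitting. In either case the sequence would split, which is excluded. Hence \(E\) is indecomposable.

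The main obstacle is the nonsplitting computation: tracking the \(\eta\)-coefficients so as to obtain the contradictory scalar identity.
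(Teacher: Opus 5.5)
Your proposal is correct and follows essentially the same route as the paper: polynomiality in \(b\) gives the module structure, multiplication by \(\eta\) gives exactness, and the section ansatz \(v\mapsto v+\eta\phi(v)\), \(\overline v\mapsto\overline v+\eta\overline{\psi(v)}\) is tested against \(G_{-\epsilon},G_{1-\epsilon}\) (forcing \(\phi=\psi\), \([\phi,t]=0\)) and then against \(L_0,L_1\) (incompatible values of \([\phi,D]\)); simplicity of \(M\) gives length two, and freeness of the quotient over \(\C[L_0]\) gives the ranks. One harmless slip: the \(\eta\)-coefficient of \(s(L_mv)=L_ms(v)\) is \([\phi,t^m(D+(m-2\epsilon)b)]=(m-2\epsilon)t^m\), so \([\phi,D]=(m-2\epsilon)\operatorname{Id}_P\) as in the paper, the opposite sign to yours, which does not affect the contradiction.
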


\begin{proof}
The defining relations are polynomial identities in \(b\),
so they remain valid after this substitution.  Multiplication
by \(\eta\) and reduction modulo \(\eta\) give
\eqref{eq:parameter-self-extension}.
Suppose that the projection has an even section, written
\[
 v\longmapsto v+\eta a(v),\qquad
 \overline v\longmapsto\overline v+\eta\overline{c(v)},
 \qquad a,c\in\operatorname{End}_{\C}(P).
\]
Compatibility with \(G_p\) on the odd component gives
\(at^{p+\epsilon}=t^{p+\epsilon}c\).
Taking \(p=-\epsilon\) and \(p=1-\epsilon\) yields
\(a=c\) and \(at=ta\).  Compatibility with \(L_m\) on the
even component therefore gives
\[
 [a,D]=(m-2\epsilon)\operatorname{Id}_P,\qquad m\in\Z,
\]
which is impossible for \(m=0,1\).  Thus the sequence does
not split.  By \Cref{prop:unified-H-simple}\textup{(i)},
its two factors are simple, so \(E\) is indecomposable of
length two.  The sequence splits over \(\C[L_0]\) in each
parity because its quotient is free, and
\Cref{rem:H-Cartan-rank} gives the asserted ranks.
\end{proof}

Together with parity change, this gives a nonsplit finite
Cartan-free self-extension of every module in
\Cref{thm:unified-main}.

\subsection{Ranks and examples}
\label{subsec:rank-dictionary-examples}

A nontrivial simple \(\mathfrak g^0\)-module \(M\) is finite
Cartan-free if and only if it has a finite homogeneous
\(U(\mathfrak h_R)\)-basis.  The forward implication follows from
\Cref{thm:unified-main,rem:H-Cartan-rank}, and the converse from
\(U(\mathfrak h_R)=\C[L_0]\oplus G_0\C[L_0]\).
In this case,
\[
 \operatorname{rank}_{\C[L_0]}M_{\bar0}
 =\operatorname{rank}_{\C[L_0]}M_{\bar1}
 =\operatorname{rank}_{U(\mathfrak h_R)}M.
\]
For \(P\) free of rank \(r\) over \(\C[D]\), both parity
components of \(H_{\epsilon,b}(P)\) have \(\C[L_0]\)-rank
\(r\).  Thus the Ramond rank-one result over
\(U(\mathfrak h_R)\) and the Neveu--Schwarz rank-two result over
\(\C[L_0]\) in \cite{YangYaoXia2020} both correspond to \(r=1\).

For \(r=1\), \eqref{eq:coefficient-matrix-action} gives
\(P_\lambda=\C[D]\) with \(tf(D)=\lambda f(D-1)\),
\(\lambda\in\C^\times\).  By \cref{thm:unified-main}, the
simple modules free of rank two over \(\C[L_0]\) are therefore,
up to parity, \(H_{\epsilon,b}(P_\lambda)\), with uniquely
determined parameters \(\lambda\in\C^\times\) and
\(b\ne1/2\).  For each \(r\geq2\), let
\(g_r(t)=t^{-1}+t^{r-1}\) and equip
\(P_{g_r}=\C[t^{\pm1}]\) with the \(\D\)-action
\[
 t\cdot q(t)=tq(t),\qquad
 D\cdot q(t)=tq'(t)+g_r(t)q(t).
\]
This module is simple and free of rank \(r\) over \(\C[D]\)
by \cite[Example~6.1]{GuoLiuLuZhao2020}, so
\(H_{\epsilon,b}(P_{g_r})\), \(b\ne1/2\), is simple
with each parity component free of rank \(r\) over \(\C[L_0]\).
For \(r=2\), the basis \((1,t)\) and the identity
\(t^2=(D-1)t-1\) give the matrix in
\eqref{eq:coefficient-matrix-action} as
\[
 A(D)=\begin{pmatrix}0&-1\\1&D-1\end{pmatrix},\qquad \det A(D)=1.
\]

\end{document}